\newcommand{\Q}{{\mathbb{Q}}}
\newcommand{\F}{{\mathbb{F}}}
\newcommand{\Z}{{\mathbb{Z}}}
\newcommand{\cE}{{\mathcal{E}}}
\newcommand{\cF}{{\mathcal{F}}}
\newcommand{\cG}{{\mathcal{G}}}
\newcommand{\cI}{{\mathcal{I}}}
\newcommand{\cM}{{\mathcal{M}}}
\newcommand{\cO}{{\mathcal{O}}}
\newcommand{\cS}{{\mathcal{S}}}
\newcommand\Irr{\operatorname{Irr}}
\newcommand\Ind{\operatorname{Ind}}
\newtheorem{thm}{Theorem}[section]
\newtheorem{cor}[thm]{Corollary}
\newtheorem{prop}[thm]{Proposition}
\newtheorem{lem}[thm]{Lemma}
\newtheorem{conj}[thm]{Conjecture}
\theoremstyle{definition}
\newtheorem{rem}[thm]{Remark}
\renewcommand{\leq}{\leqslant}
\renewcommand{\geq}{\geqslant}
\address{M.G.: Department of Mathematical Sciences, King's College, 
Aberdeen AB24 3UE, Scotland, UK}
\email{geck@maths.abdn.ac.uk}
\address{D.H.: Lyc\'ee Descartes, 10 rue des Minimes, 37000 Tours, France}
\email{davidhezard@yahoo.fr}
\begin{document}

\date{}

\title{On the unipotent support of character sheaves}

\author{Meinolf Geck and David H\'ezard}

\subjclass[2000]{Primary 20C15; Secondary 20G40}

\begin{abstract} 
Let $G$ be a connected reductive group over $\F_q$, where $q$ is large 
enough and the center of $G$ is connected. We are concerned with 
Lusztig's theory of {\em character sheaves}, a geometric version of 
the classical character theory of the finite group $G(\F_q)$. We show that 
under a certain technical condition, the restriction of a character sheaf 
to its {\em unipotent support} (as defined by Lusztig) is either zero or an 
irreducible local system. As an application, the generalized 
Gelfand-Graev characters are shown to form a $\Z$-basis of the $\Z$-module 
of unipotently supported virtual characters of $G(\F_q)$ (Kawanaka's 
conjecture).
\end{abstract}

\maketitle

\begin{center}
{\it Dedicated to Professors Ken-ichi Shinoda and Toshiaki Shoji  on their 
60th birthday}
\end{center}
\bigskip

\pagestyle{myheadings}
\markboth{Geck and H\'ezard}{Unipotent support of character sheaves}

\section{Introduction} \label{sec1}

Let $G$ be a connected reductive algebraic group over $\overline{\F}_p$, an 
algebraic closure of the finite field with $p$ elements where $p$ is a prime. 
Let $q$ be a power of $p$ and assume that $G$ is defined over the finite
field $\F_q\subseteq \overline{\F}_p$, with corresponding Frobenius map 
$F \colon G \rightarrow G$. Then it is an important problem to determine
and to understand the values of the irreducible characters (in the sense 
of Frobenius) of the finite group $G^F$. For this purpose, Lusztig 
\cite{L4} has developed the theory of {\em character sheaves}; see \cite{Lg}
for a general overview. This theory produces some geometric objects over 
$G$ (provided by intersection cohomology with coefficients in 
$\overline{\Q}_\ell$, where $\ell\neq p$ is a prime) from which the 
irreducible characters of $G^F$ can be deduced for any $q$. In this way, 
the rather complicated patterns involved in the values of the irreducible 
characters of $G^F$ are seen to be governed by geometric principles. 

In this paper, we discuss an example of this interrelation between geometric
principles and properties of character values. On the geometric side, we will
be concerned with the restriction of a character sheaf $A$ to the unipotent 
variety of $G$. Under some restriction on $p$, Lusztig \cite{LuUS} has 
associated to $A$ a well-defined unipotent class $\cO_A$ of $G$, called its 
{\em unipotent support}. We will be interested in the restriction of $A$ to 
$\cO_A$. Under a certain technical condition (formulated in \cite{G1}, 
following a suggestion of Lusztig) the restriction of $A$ to $\cO_A$ is 
either zero or an irreducible $G$-equivariant local system on $\cO_A$ (up to
shift); see Section~3. The verification of that technical condition can be 
reduced to a purely combinatorial problem, involving the induction of 
characters of Weyl groups, the Springer correspondence and the data on 
families of characters in Chapter~4 of Lusztig's book \cite{L3}. The details 
of the somewhat lengthy case-by-case verification are worked out in the 
second author's thesis \cite{david}; the main ingredients will be explained 
in Section~2.

On the character-theoretic side, we will consider the {\em generalized
Gelfand--Graev representations} (GGGR's for short) introduced by Kawanaka 
\cite{K1}, \cite{K2}. In Section~4, assuming that $p,q$ are large and the 
center of $G$ is connected, we deduce that Kawanaka's conjecture \cite{K3} 
holds, that is, the characters of the various GGGR's of $G^F$ form a 
$\Z$-basis of the $\Z$-module of unipotently supported virtual characters 
of $G^F$. As a further application, in Proposition~\ref{Charac}, we obtain 
a new characterisation of GGGR's in terms of vanishing properties of their 
character values.

\section{The Springer correspondence, families and induction} 
\label{sec2}

In this section, we deal with the combinatorial basis for the discussion
of the unipotent support of character sheaves. We keep the basic assumptions 
of the introduction: $G$ is a connected reductive algebraic group over 
$\overline{\F}_p$; we assume throughout that $p$ is a good prime for $G$ 
and that the center of $G$ is connected. Let $B\subseteq G$ be a Borel 
subgroup and $T\subseteq B$ a maximal torus. Let 
$W=\mbox{N}_G(T)/T$ be the Weyl group of $G$, with set of generators $S$ 
determined by the choice of $T\subseteq B$. 

Let $\Irr(W)$ be the set of irreducible characters of $W$ (over an
algebraically closed field of characteristic $0$). The Springer 
correspondence associates with each $E \in \Irr(W)$ a pair $(u,\psi)$ 
where $u \in G$ is unipotent (up to $G$-conjugacy) and $\psi$ is an 
irreducible character of the group of components $\operatorname{A}_G(u)=
\operatorname{C}_G(u)/\operatorname{C}_G(u)^\circ$; see \cite[\S 13.1]{L3}. 
We write this correspondence as $E \leftrightarrow (u,\psi)$.

Now we can define three invariants $a_E$, $b_E$ and $d_E$ for $E \in \Irr(W)$. 

\begin{itemize}
\item[$b_E$] is the smallest $i\geq 0$ such that $E$ appears with non-zero
multiplicity in the $i$th symmetric power of the reflection representation 
of $W$; see \cite[(4.1.2)]{L3}.
\item[$a_E$] is the largest $i\geq 0$ such that $u^i$ divides the generic
degree $D_E(u) \in {\Q}[u]$ defined in terms of the generic Iwahori--Hecke
algebra over ${\Q}[u^{1/2},u^{-1/2}]$; see \cite[(4.1.1)]{L3}.
\item[$d_E$] is $\dim \mathfrak{B}_u$ where $\mathfrak{B}_u$ is the variety of
Borel subgroups containing a unipotent $u \in G$ such that $E
\leftrightarrow (u,\psi)$ for some $\psi\in \Irr(\mbox{A}_G(u))$; see 
\cite[\S 13.1]{L3}.
\end{itemize}
We will be interested in several compatibility properties of these 
invariants.

\begin{lem} \label{lem21} We have $a_E\leq d_E \leq b_E$ for all $E \in 
\Irr(W)$.
\end{lem}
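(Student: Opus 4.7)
The plan is to derive both inequalities from Lusztig's theory of special characters and families, combined with the Borho--MacPherson description of the Springer correspondence in top cohomology. Fix $E \in \Irr(W)$ and let $E \leftrightarrow (u, \psi)$ be its Springer pair, so that $d_E = \dim \fB_u$ by definition.

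For the right inequality $d_E \leq b_E$: this is classical. The Borho--MacPherson theorem realizes $E$ inside $H^{2 d_E}(\fB_u)$ as the $\psi$-isotypic component under $\operatorname{A}_G(u)$, and a standard comparison with the coinvariant algebra of $W$ then shows $b_E \geq d_E$, with equality precisely when $\psi$ is trivial; see \cite[\S 13]{L3}. One subtlety I would highlight is that this argument does not go through the naive claim that $H^*(\fB_u)$ is a graded quotient of $H^*(\cO_{\cB})$ under the coinvariant action (the Springer $W$-action is not induced by restriction from the flag variety), so one really does need the full Borho--MacPherson framework.

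For the left inequality $a_E \leq d_E$: let $\cF \subseteq \Irr(W)$ be the Lusztig family containing $E$, with unique special character $E_0 \in \cF$. By \cite[\S 4]{L3}, $a_E = a_{E_0}$ (the $a$-function is constant on families) and $a_{E_0} = b_{E_0}$ (characterizing property of special characters). Let $E_0 \leftrightarrow (u_0, 1)$, with $u_0$ in the associated special unipotent class $\cO_\cF$. Applying the right inequality to $E_0$ (for which $\psi=1$) yields the equality $b_{E_0} = d_{E_0}$. It remains to establish $d_{E_0} \leq d_E$, equivalently the closure relation $u \in \overline{\cO}_\cF$ (so that $\dim \cO_u \leq \dim \cO_\cF$ and hence $\dim \fB_u \geq \dim \fB_{u_0}$); this is the compatibility of the Springer correspondence with the closure order inside a family, which can be extracted from Lusztig's structural description or, if preferred, verified directly from the tables in \cite[Ch.~4]{L3}. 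Chaining gives $a_E = a_{E_0} = b_{E_0} = d_{E_0} \leq d_E$.

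The main obstacle is precisely this last closure statement for non-special members of $\cF$: it is the only step that requires more than the formal definitions of families, special characters and the Springer correspondence, while everything else is either a definition or a well-documented consequence of Lusztig's theory.
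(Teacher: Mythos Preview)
The paper's own proof is just two citations (Lusztig \cite[Cor.~10.9]{LuUS} for $a_E\leq d_E$ and Spaltenstein \cite[\S 1.1]{Sp} for $d_E\leq b_E$), so there is no detailed argument to compare against. Your treatment of $d_E\leq b_E$ is in line with what lies behind the Spaltenstein reference.

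For $a_E\leq d_E$, however, your reduction does not actually gain anything. You correctly establish $a_E=a_{E_0}=b_{E_0}=d_{E_0}$ using constancy of $a$ on families, the definition of special, and the equality $b_{E_0}=d_{E_0}$ for $\psi=1$. But once you know $a_E=d_{E_0}$, the remaining step $d_{E_0}\leq d_E$ is \emph{literally the same inequality} $a_E\leq d_E$ that you set out to prove. You have reformulated it, not reduced it. Your suggestion to then verify $d_{E_0}\leq d_E$ ``from the tables'' is of course possible, but that is just a case-by-case check of the original inequality, and the standard conceptual proof is precisely the one in \cite[Cor.~10.9]{LuUS}, which uses the unipotent-support machinery of that paper. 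So the ``main obstacle'' you flag is not a residual technicality; it is the whole content of the left inequality.

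A small side remark: you write that $d_{E_0}\leq d_E$ is ``equivalently the closure relation $u\in\overline{\cO}_\cF$''. It is not equivalent: closure implies the dimension inequality, but not conversely. Only the dimension inequality is needed for your chain, so this does not break the argument, but the claimed equivalence is false as stated.
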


\begin{proof} See \cite[Cor.~10.9]{LuUS} for the first inequality and
\cite[\S 1.1]{Sp} for the second. The inequality $a_E\leq b_E$ was first
observed by Lusztig; see \cite[4.1.3]{L3}.
\end{proof}

Recall that $\Irr(W)$ is partitioned into {\em families} and that each
family contains a unique {\em special} $E \in \Irr(W)$, that is, a character
such that $a_E=b_E$; see \cite[4.1.4]{L3}. Furthermore, in \cite[Chap.~4]{L3}, 
Lusztig associates with any family $\cF\subseteq \Irr(W)$ a finite group 
$\cG_{\cF}$, case-by-case for each type of finite Weyl group. (The groups 
$\cG_{\cF}$ form a crucial ingredient in the statement of the Main 
Theorem~4.23 of \cite{L3}.) If $G$ is simple modulo its center, then 
$\cG_{\cF}\cong$ $\mathfrak{S}_3$, $\mathfrak{S}_4$, $\mathfrak{S}_5$ 
or $(\Z/2\Z)^e$ for some $e \geq 0$. 

Now let $G^*$ be the Langlands dual of $G$, with Borel subgroup $B^*$ and 
maximal torus $T^*\subseteq G^*$. Let $W^*=\mbox{N}_{G^*}(T^*)/T^*$ be the 
Weyl group of $G^*$, with generating set $S^*$ determined by $T^*\subseteq 
B^*$. We can naturally identify $W$ and $W^*$. Note that $a_E$ and $b_E$ are
independent of whether we regard $E$ as a representation of $W$ or of $W^*$.
However, it does make a difference as far as $d_E$ is concerned.

Let $s \in G^*$ be semisimple and $W_s$ be the Weyl group of $\mbox{C}_{G^*}
(s)$. (Note that $\mbox{C}_{G^*}(s)$ is a connected reductive group since 
the center of $G$ is connected.) Replacing $s$ by a conjugate, we may assume
that $s\in T^*$. Then $W_s$ is a subgroup of $W^*$ and, hence, may be 
identified with a subgroup of $W$. So we can consider the induction of 
characters from $W_s$ to $W$.

\begin{prop} \label{prop22} Let $s \in G^*$ be semisimple and $\cF 
\subseteq \Irr(W_s)$ be a family. If $E_0$ is the special character 
in $\cF$, then we have
\[ \Ind_{W_s}^W(E_0)=E_0'\,+\,\mbox{a combination of $\tilde{E} \in 
\Irr(W)$ with $b_{\tilde{E}}>d_{\tilde{E}}\geq b_{E_0}$},\]
where $E_0' \in \Irr(W)$ is such that $b_{E_0'}=d_{E_0'}=b_{E_0}$; 
furthermore, $E_0'\leftrightarrow (u,1)$ under the Springer correspondence, 
where $1$ stands for the trivial character.
\end{prop}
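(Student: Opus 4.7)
The plan is to set $E_0' := j_{W_s}^W(E_0)$, using Lusztig's truncated ($j$-)induction. By the defining property of $j$-induction, $E_0'$ is an irreducible constituent of $\Ind_{W_s}^W(E_0)$ of multiplicity one, satisfies $b_{E_0'} = b_{E_0}$, and every other irreducible constituent $\tilde{E}$ of $\Ind_{W_s}^W(E_0)$ has $b_{\tilde{E}} > b_{E_0}$. This immediately provides the shape of the decomposition and the inequality $b_{\tilde{E}} > b_{E_0}$ for the error terms.

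To identify $E_0'$ more precisely, I would invoke the Lusztig--Spaltenstein theorem that $j$-induction preserves specialness: since $E_0$ is special in $W_s$, the character $E_0'$ is special in $W$. Hence $a_{E_0'} = b_{E_0'} = b_{E_0}$, and Lemma~\ref{lem21} then forces $d_{E_0'} = b_{E_0}$. For the Springer pair, Spaltenstein's characterisation that $b_E = d_E$ holds if and only if the Springer local system is trivial (a consequence of the results in \cite{Sp}) gives $E_0' \leftrightarrow (u,1)$ for some unipotent $u$, as required.

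For the remaining constituents $\tilde{E}$, the bound $d_{\tilde{E}} \geq b_{E_0}$ follows from monotonicity of the $a$-function along induction: any constituent of $\Ind_{W_s}^W(E_0)$ satisfies $a_{\tilde{E}} \geq a_{E_0}$, so combining $a_{E_0} = b_{E_0}$ (by specialness of $E_0$) with $a_{\tilde{E}} \leq d_{\tilde{E}}$ (Lemma~\ref{lem21}) yields $d_{\tilde{E}} \geq b_{E_0}$.

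The main obstacle I anticipate is the strict inequality $b_{\tilde{E}} > d_{\tilde{E}}$ for every $\tilde{E} \neq E_0'$ appearing in the induction. Via the Spaltenstein characterisation above, this reduces to showing that no constituent of $\Ind_{W_s}^W(E_0)$ other than $E_0'$ has its Springer pair of the form $(u', 1)$. I would approach this by combining the compatibility of the Springer correspondence with the induction data from $W_s$ to $W$ with the explicit case data on families, generic degrees and the Springer correspondence collected in \cite[Chap.~4, \S 13.1]{L3}, resorting to a type-by-type analysis if necessary.
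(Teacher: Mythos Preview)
The paper does not give its own argument: the proof is a bare citation to \cite[\S 10]{LuUS} and \cite[\S 13.1]{L3}. Your sketch is exactly the argument one extracts from those references, so in spirit the approaches coincide: define $E_0'$ via $j$-induction, use that $j$-induction sends special to special (Lusztig--Spaltenstein), invoke $a_E\leq d_E\leq b_E$ to pin down $d_{E_0'}$, and use the characterisation $b_E=d_E\Leftrightarrow E\leftrightarrow(u,1)$ for the Springer datum.

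Two points deserve comment. First, your step ``$a_{\tilde E}\geq a_{E_0}$ for every constituent of $\Ind_{W_s}^W(E_0)$'' is standard when $W_s$ is a \emph{parabolic} subgroup, but here $W_s$ is only a reflection (pseudo-Levi) subgroup of $W$, and the Hecke-algebraic proof of $a$-monotonicity does not apply verbatim. In Lusztig's treatment the bound $d_{\tilde E}\geq b_{E_0}$ is obtained geometrically, from the compatibility of the Springer correspondence with the induction of nilpotent orbits from $\operatorname{C}_{G^*}(s)$ to $G^*$, rather than from an $a$-function inequality; you should either cite that or supply a separate justification.

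Second, the strict inequality $b_{\tilde E}>d_{\tilde E}$ for $\tilde E\neq E_0'$, which you flag as the obstacle, is \emph{not} handled type-by-type in \cite[\S 10]{LuUS}: it is again a consequence of the same geometric input (the orbit $\cO$ is the unique orbit of its dimension meeting the induced orbit from $\operatorname{C}_{G^*}(s)$ in the required way, so only one constituent can carry the trivial local system at the minimal $d$-value). A case-by-case verification would work, but it is heavier than what the cited references actually do.
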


\begin{proof} See \cite[\S 10]{LuUS} and \cite[\S 13.1]{L3}.
\end{proof}

We are now looking for a condition which guarantees that all $\tilde{E}
\neq E_0'$ occurring in the decomposition of $\Ind_{W_s}^W(E_0)$ have 
$d_{\tilde{E}}>b_{E_0}$. Following a suggestion of Lusztig, such a condition 
has been formulated in \cite[4.4]{G1}. In order to state it, we introduce 
the following notation.

Let $\cS_G$ be the set of all pairs $(s,\cF)$ where $s \in G^*$ is semisimple 
(up to $G^*$-conjugacy) and $\cF \subseteq \Irr(W_s)$ is a family. Following 
\cite[\S 13.3]{L3}, we define a map 
\[ \Phi_G \colon \cS_G \rightarrow \{\mbox{unipotent classes of $G$}\},\]
as follows. Let $(s,\cF) \in \cS_G$ and $E_0\in \cF$ be special. Then
consider the induction $\Ind_{W_s}^W(E_0)$ and let $E_0'$ be as in 
Proposition~\ref{prop22}. Now define $\cO=\Phi_G(s,\cF)$ to be the 
unipotent class containing $u$ where $E_0'\leftrightarrow (u,1)$ under 
the Springer correspondence. 

\begin{prop}[H\'ezard \protect{\cite{david}}] \label{prop24} Assume that 
$s \in G^*$ is semisimple and isolated, that is, $\operatorname{C}_{G^*}(s)$ 
is not contained in a Levi complement of any proper parabolic subgroup of 
$G^*$. Let $\cF\subseteq \Irr(W_s)$ be a family and assume that  
\begin{equation*}
\mbox{\fbox{$|\cG_{s,\cF}|=|\operatorname{A}_G(u)|
\quad\mbox{where}\quad u\in\cO=\Phi_G(s,\cF).$}} \tag{$*$}
\end{equation*}
Then the following sharper version of Proposition~\ref{prop22} holds: If
$E_0$ is the special character in $\cF$, then we have
\[ \Ind_{W_s}^W(E_0)=E_0'\,+\,\mbox{a combination of $\tilde{E} \in 
\Irr(W)$ with $d_{\tilde{E}}>b_{E_0}$}.\]
\end{prop}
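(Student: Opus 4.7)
The plan is to sharpen Proposition~\ref{prop22} using hypothesis $(*)$ as a counting constraint that eliminates the borderline case $d_{\tilde{E}} = b_{E_0}$. Proposition~\ref{prop22} already guarantees $d_{\tilde{E}} \geq b_{E_0}$ for every constituent $\tilde{E} \neq E_0'$, so only the equality case needs to be excluded.

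Via the Springer correspondence, any $\tilde{E} \in \Irr(W)$ with $d_{\tilde{E}} = b_{E_0} = d_{E_0'}$ corresponds to a pair $(u,\psi)$ with the same unipotent $u \in \cO = \Phi_G(s,\cF)$ (since $\dim \fB_u = b_{E_0}$) and some $\psi \in \Irr(\operatorname{A}_G(u))$. There are therefore at most $|\operatorname{A}_G(u)|$ such characters in total, with $E_0'$ itself corresponding to $\psi = 1$. To control the remaining $|\operatorname{A}_G(u)|-1$ possibilities, I would exploit the structure of the families on both sides: the family $\cF \subseteq \Irr(W_s)$ containing $E_0$, parameterized via $\cG_{s,\cF}$ according to Lusztig's Main Theorem~4.23 in \cite[Chap.~4]{L3}, and the family $\cF' \subseteq \Irr(W)$ containing $E_0'$, whose parameterization via $\cG_{\cF'}$ determines which characters of the form $(u,\psi)$ can arise.

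The key idea is that hypothesis $(*)$ is a tightness condition. The natural upper bound for the number of constituents of $\Ind_{W_s}^W(E_0)$ landing at the critical value $d_{\tilde{E}} = b_{E_0}$ is controlled, on the one hand, by $|\cG_{s,\cF}|$ (since the induction of the special character of a family is governed by the family's group), and on the other hand by $|\operatorname{A}_G(u)|$ (since the candidate pairs $(u,\psi)$ are indexed by $\Irr(\operatorname{A}_G(u))$). When these two bounds coincide, the count must saturate precisely at $E_0'$, leaving no room for an additional constituent with $d_{\tilde{E}} = b_{E_0}$.

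The main obstacle is that no uniform identification between $\cG_{s,\cF}$, $\cG_{\cF'}$, and $\operatorname{A}_G(u)$ is available, so the tightness has to be checked type-by-type. For classical groups one works with Lusztig's symbol combinatorics, tracking how $j$-induction of families of Weyl group characters interacts with defect; for exceptional types, each isolated semisimple class $s$ and each family $\cF \subseteq \Irr(W_s)$ must be inspected against the tables of Chapter~4 of \cite{L3}, with $\Ind_{W_s}^W(E_0)$ computed explicitly and its constituents matched with pairs $(u,\psi)$ via the Springer correspondence. This lengthy case-by-case verification is precisely the content of \cite{david}.
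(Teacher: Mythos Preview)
Your proposal lands in the right place---the case-by-case verification in \cite{david}, which is exactly what the paper does---but the ``conceptual'' heuristic you sketch in the middle paragraphs does not hold up, and the paper makes no attempt at such an argument.

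Two concrete problems. First, the assertion that any $\tilde{E}\in\Irr(W)$ with $d_{\tilde{E}}=b_{E_0}$ must correspond under Springer to a pair $(u,\psi)$ with $u\in\cO$ is unjustified: the invariant $d_{\tilde{E}}=\dim\fB_{u'}$ only determines $\dim\cO'$, and distinct unipotent classes can share a dimension. So the borderline constituents are not a priori indexed by $\Irr(\operatorname{A}_G(u))$, and the bound by $|\operatorname{A}_G(u)|$ is not established. Second, even granting both ``bounds'' you invoke, the inference ``when the two bounds coincide, the count must saturate precisely at $E_0'$'' is a non sequitur: having two quantities bounded by the same number $n$ does not force either of them to equal~$1$. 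There is no mechanism here that converts $|\cG_{s,\cF}|=|\operatorname{A}_G(u)|$ into the vanishing of $E_0''$.

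The paper's proof is simply: write $\Ind_{W_s}^W(E_0)=E_0'+E_0''+\text{(higher $d$)}$, reduce to $G$ simple modulo center, and then compute $E_0''$ explicitly---via induce/restrict tables and Springer tables in exceptional types, and via symbol combinatorics in classical types---checking in each case that $E_0''=0$ whenever $(*)$ holds. Condition $(*)$ is used only as a filter on the list of cases, not as the input to a counting argument. Your last paragraph captures this correctly; the preceding two paragraphs should be dropped or reframed as informal motivation rather than as part of the proof.
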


\begin{proof} In the setting of Proposition~\ref{prop22}, let us write 
\[ \Ind_{W_s}^W(E_0)=E_0'+E_0''\,+\,\mbox{a combination of $\tilde{E} \in 
\Irr(W)$ with $d_{\tilde{E}}>b_{E_0}$}\]
where $E_0''$ is the sum of all $\tilde{E}\in \Irr(W)$ such that 
$d_{\tilde{E}}=b_{E_0}$, $\tilde{E}\neq E_0'$ and $\tilde{E}$ appears 
in $\Ind_{W_s}^W(E_0)$. Thus, we must show that $E_0''=0$ if ($*$) holds. 
By standard arguments, this can be reduced to the case where $G$ is simple 
modulo its center.  

The reflection subgroups of $W$ which can possibly arise as $W_s$ for 
some semisimple element $s \in G^*$ are classified by a standard algorithm; 
see \cite{der}. 

Now, if $G$ is of exceptional type, $E_0''$ can be computed in all cases 
using explicit tables for the Springer correspondence \cite{Sp} and 
induce/restrict matrices for the characters of Weyl groups; see 
\cite[\S 2.6]{david} where tables specifying $E_0''$ can be found 
for each type of $G$. By inspection of these tables, one checks that 
if ($*$) holds, then $E_0''=0$. 

If $G$ is of classical type, the induction of characters of Weyl
groups and the Springer correspondence can be described in purely
combinatorial terms, involving manipulations with various kinds
of symbols (\cite[\S 13]{L3a}). The condition ($*$) can also be 
formulated in purely combinatorial terms. Using this information, 
it is then possible to check that, if ($*$) holds, then $E_0''=0$. 
For the details of this verification, see \cite[Chap.~3]{david}.

We remark that, for $G$ of type $B_n$, Lusztig \cite[4.10]{L5} has shown 
that $E_0''=0$ even without assuming that ($*$) holds.
\end{proof}

Finally, the following result settles the question of when condition
($*$) is actually satisfied. 

\begin{prop}[Lusztig \protect{\cite[13.3, 13.4]{L3}}\footnote{Note added
January 2008: A new recent preprint by Lusztig \cite{LuNeu} provides a 
detailed proof of the statements in \cite[13.3, 13.4]{L3}.}; see also 
H\'ezard \protect{\cite{david}}] \label{prop25} Let $\cO$ be a unipotent class.
Then
\[ |\cG_{s,\cF}|\leq |\operatorname{A}_G(u)| \quad \mbox{for all $(s,\cF)
\in \cS_G$ such that $u \in \cO=\Phi_G(s,\cF)$}.\]
Furthermore, there exists some $(s,\cF)$ where $s$ is isolated and we 
have equality. If $\cO$ is $F$-stable (where $F$ is a Frobenius map on 
$G$), then such a pair $(s,\cF)$ can be chosen to be $F$-stable, too.
\end{prop}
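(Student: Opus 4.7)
The approach, following Lusztig \cite[13.3, 13.4]{L3}, is a reduction to the simple case followed by a case-by-case verification, with the $F$-stability handled by equivariance. By standard arguments on direct products and central isogenies (valid since the center of $G$ is connected), both the inequality $|\cG_{s,\cF}| \leq |\operatorname{A}_G(u)|$ and the existence of an isolated $s$ attaining equality reduce to the case where $G$ is simple modulo its center: the Springer correspondence, the construction of $\cG_\cF$, and the component group $\operatorname{A}_G(u)$ all respect direct products and are unchanged under the relevant isogenies.

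For $G$ of exceptional type, both $|\cG_{s,\cF}|$ and $|\operatorname{A}_G(u)|$ are tabulated by family in \cite[Chap.~4]{L3} and via the Springer correspondence \cite{Sp}; the isolated semisimple elements of $G^*$ are classified via the extended Dynkin diagram. The two claims then reduce to a finite inspection, carried out in \cite{david}.

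For $G$ of classical type, the plan is to compute both sides from the same symbol data. Families yield $\cG_\cF \cong (\Z/2\Z)^e$ with $e$ determined by a defect attached to the symbol parametrizing $\cF$, and $\operatorname{A}_G(u) \cong (\Z/2\Z)^{e'}$ with $e'$ read off from the Jordan type of $u$; since the Springer correspondence in classical types is given by explicit symbol operations, both invariants can be compared directly. This yields the inequality, and for a given $\cO$ one reverse-engineers the symbol to build an isolated semisimple $s$ and a family $\cF \subseteq \Irr(W_s)$ with $\Phi_G(s,\cF) = \cO$ and $|\cG_{s,\cF}| = |\operatorname{A}_G(u)|$.

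For the $F$-stability assertion, note that $F$ acts on $\cS_G$ and on the set of unipotent classes of $G$, and the construction above is essentially functorial in this action: if $\cO$ is $F$-stable, the finite set of isolated pairs $(s,\cF)$ achieving equality with $\Phi_G(s,\cF) = \cO$ is non-empty and $F$-stable, and one can pick $s \in T^{*F}$ from an $F$-stable $G^*$-conjugacy class by Lang's theorem, then use the $F$-stable symbol data to determine $\cF$. The main obstacle is not conceptual but the extensive bookkeeping buried in the symbol combinatorics for classical groups and in the exceptional-type tables — this is exactly what \cite[Chap.~3]{david} carries out.
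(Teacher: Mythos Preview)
Your proposal is correct and follows essentially the same approach as the paper: reduction to the case where $G$ is simple modulo its center, followed by a case-by-case verification using the Springer correspondence tables and symbol combinatorics (classical type) or explicit tables (exceptional type), with the details deferred to \cite{david}. The paper additionally cites \cite{der2}, \cite{der} for the classification of the relevant semisimple elements in the exceptional cases and mentions explicit matrix computations for the classical types, but the structure of the argument is the same as yours.
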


\begin{proof} Again, this can be reduced to the case where $G$ is simple 
modulo its center, where the assertion is checked case-by-case along 
the lines of the proof of Proposition~\ref{prop24}.  The existence of
suitable semisimple elements $s\in G^*$ with centralisers of the required
type is checked using the tables in \cite{der2}, \cite{der} (for $G$ of 
exceptional type) or using explicit computations with suitable 
matrix representations (for $G$ of classical type). Again, see 
\cite{david} for more details.
\end{proof}

It would be interesting to find proofs of Propositions~\ref{prop24}  and
\ref{prop25} which do not rely on a case-by-case argument.

\section{Unipotent support} \label{sec3}

Recall that $G$ is assumed to have a connected center and that we are 
working over a field of good characteristic. Now let $\hat{G}$ be the 
set of character sheaves on $G$ (up to isomorphism) over 
$\overline{\Q}_\ell$ where $\ell$ is a prime, $\ell \neq p$. By Lusztig 
\cite[\S 17]{L4}, we have a natural partition 
\[ \hat{G}=\coprod_{(s,\cF)\in \cS_G} \hat{G}_{s,\cF} \qquad\mbox{where}
\qquad \hat{G}_{s,\cF} \stackrel{1{-}1}{\longleftrightarrow} 
\cM(\cG_{\cF}).\]
Here, as in Section~2, $\cG_\cF$ is the finite group associated to a family
$\cF \subseteq \Irr(W_s)$ as in \cite[Chap.~4]{L3}. Furthermore, for 
any finite group $\Gamma$, the set $\cM(\Gamma)$ consists of all pairs 
$(x,\sigma)$ (up to conjugacy) where $x \in \Gamma$ and $\sigma \in 
\Irr(\mbox{C}_{\Gamma}(x))$.

Also recall that we have a natural map $\Phi_G \colon \cS_G\rightarrow 
\{\mbox{unipotent classes of $G$}\}$, defined as in \cite[\S 3.3]{L3}. From 
now on, we assume that $p$ is large enough, so that the main results of 
Lusztig \cite{LuUS} hold. (Here, ``large enough'' means that we can operate 
with the Lie algebra of $G$ as if we were in characteristic~$0$, e.g., we 
can use $\exp$ to define a morphism from the nilpotent variety in the Lie 
algebra to the unipotent variety of $G$.)

\begin{thm}[Lusztig \protect{\cite[Theorem~10.7]{LuUS}}] \label{thm31}
Let $(s,\cF) \in \cS_G$ and $\cO=\Phi_G(s,\cF)$ be the associated unipotent 
class. Then the following hold.
\begin{itemize}
\item[(a)] There exists some $A \in \hat{G}_{s,\cF}$ and an element
$g \in G$ with Jordan decomposition $g=g_sg_u=g_sg_u$ (where $g_s$ is
semisimple and $g_u \in \cO$) such that $A|_{\{g\}}\neq 0$.
\item[(b)] For any A$ \in \hat{G}_{s,\cF}$, any unipotent class $\cO'\neq 
\cO$ with $\dim \cO'\geq \dim \cO$, and any $g'\in G$ with unipotent part 
in $\cO'$, we have $A|_{\{g'\}}=0$.
\end{itemize}
\end{thm}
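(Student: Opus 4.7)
The plan is to translate from the geometric setting of character sheaves to the arithmetic setting of characteristic functions on $G^F$, and then to analyse the resulting values via Lusztig's character formula together with the combinatorial input of Section~\ref{sec2}. Both (a) and (b) involve only isomorphism classes of sheaves and conjugacy classes in $G$, so one is at liberty to adjust the Frobenius map $F$: choose $q$ and $F$ so that $(s,\cF)$ and $\cO=\Phi_G(s,\cF)$ are $F$-stable and every $A\in\hat{G}_{s,\cF}$ admits an $F$-equivariant structure. The characteristic function $\chi_A$ is then a class function on $G^F$, and by the main results of \cite{L4} the span of $\{\chi_A:A\in\hat{G}_{s,\cF}\}$ is a well-defined subspace of class functions on $G^F$ corresponding to an explicit subset of $\Irr(G^F)$. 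Vanishing of $A|_{\{g\}}$ for every such $A$ can be detected by vanishing of $\chi_A(g)$ after varying the $F$-structure (a standard trick), so the problem reduces to the arithmetic side.

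For $g=g_sg_u\in G^F$, Lusztig's character formula expresses $\chi_A(g)$ as a linear combination of values $Q^C_M(g_u)$ of generalized Green functions on the connected reductive centraliser $C=\mbox{C}_G(g_s)$, which is connected by the hypothesis on the centre of $G$. These generalized Green functions are supported on the unipotent variety of $C$, and their restrictions to unipotent classes of $C$ are governed by the Springer correspondence for the Weyl group $W_s$: on a given unipotent class they vanish unless the $a$-invariants of characters of $W_s$ match the $d$-invariants of the relevant local systems. Combined with Proposition~\ref{prop22} applied to the special character $E_0\in\cF$, this identifies $E_0'\leftrightarrow(u,1)$ as the unique constituent of $\Ind_{W_s}^W(E_0)$ with $d_{E_0'}=b_{E_0}$; every other constituent $\tilde E$ has $d_{\tilde E}>b_{E_0}$, hence corresponds under Springer to a unipotent class of $G$ strictly smaller than $\cO$. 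Consequently the maximal-dimensional unipotent class of $G$ on which any $\chi_A$, and thus any $A\in\hat{G}_{s,\cF}$, can be non-zero is precisely $\cO$.

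Part~(b) is then immediate: for $\cO'\neq\cO$ with $\dim\cO'\geq\dim\cO$, every generalized Green function appearing in the formula vanishes on $g'_u\in\cO'$, so $\chi_A(g')=0$ for all relevant $F$-structures, and this upgrades to $A|_{\{g'\}}=0$ at the sheaf level. For part~(a), a non-degeneracy argument, using the bijection $\hat{G}_{s,\cF}\leftrightarrow\cM(\cG_{\cF})$ together with the fact that $E_0'$ appears with multiplicity one in $\Ind_{W_s}^W(E_0)$, forces at least one $A\in\hat{G}_{s,\cF}$ to satisfy $\chi_A(g)\neq 0$ for some $g=g_sg_u$ with $g_u\in\cO$, whence $A|_{\{g\}}\neq 0$. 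The hard step is the precise identification of the combinatorial data $(a_E,b_E,d_E;\Ind_{W_s}^W(E_0))$ with the geometric content of Lusztig's character formula on mixed elements; this is a substantial piece of work, carried out in \cite{LuUS}, and is exactly the point at which the results of Section~\ref{sec2} enter.
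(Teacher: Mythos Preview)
The paper does not give a proof of Theorem~\ref{thm31}; it is simply quoted as Lusztig's result \cite[Theorem~10.7]{LuUS}, with no argument supplied. So there is nothing in the paper to compare your proposal against.

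As for the proposal itself: the overall shape---pass to characteristic functions for a suitable Frobenius, express them via Lusztig's character formula in terms of generalized Green functions, and control the support through the Springer correspondence and the induction $\Ind_{W_s}^W(E_0)$---is indeed the architecture of Lusztig's argument in \cite[\S 10]{LuUS}. But there is a concrete slip. You write that Proposition~\ref{prop22} gives $d_{\tilde E}>b_{E_0}$ for every constituent $\tilde E\neq E_0'$; it does not. Proposition~\ref{prop22} only yields $d_{\tilde E}\geq b_{E_0}$ (together with $b_{\tilde E}>d_{\tilde E}$). The strict inequality is exactly the content of Proposition~\ref{prop24}, which requires the extra hypothesis~($*$), and Theorem~\ref{thm31} is stated \emph{without}~($*$). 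This matters for part~(b): your dimension count, as written, does not rule out a class $\cO'\neq\cO$ with $\dim\cO'=\dim\cO$. What actually handles that case in Lusztig's proof is a closure argument---the relevant complexes are supported on $\overline{\cO}$, hence vanish on any $\cO'$ not contained in $\overline{\cO}$, independently of dimension. Your sketch conflates the closure condition with a strict dimension bound that is not available in general.

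Finally, your last paragraph concedes that the ``hard step'' is carried out in \cite{LuUS}. That is accurate, but it also means the proposal is a pointer to where the proof lives rather than a proof; the substantive analysis (the precise interaction of the character formula with the Green-function algorithm and the orthogonality relations in \cite[\S 10]{LuUS}) is not reproduced here.
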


Consequently, the class $\cO$ is called the {\em unipotent support} for
the character sheaves in $\hat{G}_{s,\cF}$. Note that it may actually 
happen that $A|_{\cO}=0$ for $A \in \hat{G}_{s,\cF}$. 

Given a unipotent class $\cO$, we denote by $\cI_{\cO}$ the set of 
irreducible $G$-equivariant $\overline{\Q}_\ell$-local systems on 
$\cO$ (up to isomorphism).

\begin{thm}[Geck \protect{\cite[Theorem~4.5]{G1}}; see  also the remarks 
in Lusztig \protect{\cite[1.6]{L5}}] \label{finmor} Let $s\in G^*$ be 
semisimple and $\cF\subseteq \Irr(W_s)$ be a family. Let $\cO=
\Phi_G(s,\cF)$ be the associated unipotent class and assume that 
condition {\rm ($*$)} in Proposition~\ref{prop24} is satisfied. 
Then, for any $A \in \hat{G}_{s,\cF}$, the restriction $A|_{\cO}$ is 
either zero or an irreducible $G$-equivariant local system (up to shift). 
Furthermore, the map $A \mapsto A|_{\cO}$ defines a bijection from the set
of all $A\in \hat{G}_{s,\cF}$ with $A|_{\cO}\neq 0$ onto $\cI_{\cO}$. 
\end{thm}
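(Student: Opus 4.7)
The strategy is to translate this geometric statement into character-theoretic language via the characteristic functions of character sheaves on $G^F$ for sufficiently large $q$, and then to control the restriction to $\cO$ using the sharper induction formula provided by Proposition~\ref{prop24}. First, I would arrange for $\cO$ to be $F$-stable (available by Proposition~\ref{prop25}) and pass to the characteristic class function $\chi_A$ on $G^F$ attached to each $A\in\hat{G}_{s,\cF}$; it suffices to verify the claim for these characteristic functions for all sufficiently large $q$, by a standard specialization argument. Since, by Theorem~\ref{thm31}(b), $\chi_A$ vanishes on every $F$-stable unipotent class $\cO'\neq\cO$ with $\dim\cO'\geq\dim\cO$, one may expand $\chi_A|_{\cO^F}$ in the basis of Lusztig's generalized Green functions $\{Q_u^\psi:\psi\in\Irr(\operatorname{A}_G(u))\}$ for $u\in\cO$, a basis indexed precisely by $\cI_\cO$.

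Next, I would use Lusztig's parametrization $\hat{G}_{s,\cF}\leftrightarrow\cM(\cG_{\cF})$ to express $\chi_A$ as a combination of almost characters indexed by $E\in\cF$, and then pass through Jordan decomposition and Lusztig induction from $\operatorname{C}_{G^*}(s)$ to reduce everything to combinatorics on $\Ind_{W_s}^W(E)$. Proposition~\ref{prop24}, applicable under condition~$(*)$, then guarantees that only the Springer representations $\tilde{E}\leftrightarrow(u,\psi)$ with $u\in\cO$ contribute to the leading part of $\chi_A|_{\cO^F}$: every other $\tilde{E}$ appearing in $\Ind_{W_s}^W(E_0)$ has $d_{\tilde{E}}>b_{E_0}$, hence is Springer-attached to a unipotent class of strictly smaller dimension than $\cO$, and so contributes $0$ after restricting to $\cO^F$.

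Finally, I would conclude by a Fourier-transform and counting argument. Under condition~$(*)$, not only is $|\cG_{\cF}|=|\operatorname{A}_G(u)|$, but inspection of Lusztig's list $\mathfrak{S}_3,\mathfrak{S}_4,\mathfrak{S}_5,(\Z/2\Z)^e$ in \cite[Chap.~4]{L3} yields a canonical identification $\cG_{\cF}\cong\operatorname{A}_G(u)$, so that $|\cM(\cG_{\cF})|=|\cM(\operatorname{A}_G(u))|$ and $|\Irr(\cG_{\cF})|=|\cI_\cO|$. The transition matrix between the two parametrizations, restricted to the subset of pairs $(x,\sigma)\in\cM(\cG_{\cF})$ with $x=1$, should then be exactly the non-abelian Fourier transform matrix of $\cG_{\cF}$, which is invertible. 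This would show that $\chi_A|_{\cO^F}$ is either zero or proportional to a single $Q_u^\psi$, and that those $A$ with $\chi_A|_{\cO^F}\neq 0$ are in bijection with $\cI_\cO$; the vanishing sheaves are those labeled by pairs $(x,\sigma)$ with $x\neq 1$.

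The main obstacle is the second step: without the sharper form of Proposition~\ref{prop24}, the extra $\tilde{E}$ with $d_{\tilde{E}}=b_{E_0}$ in $\Ind_{W_s}^W(E_0)$ would produce spurious contributions to $\chi_A|_{\cO^F}$ that could not be absorbed into a single $Q_u^\psi$. Condition~$(*)$ is precisely what rules these out and aligns the $\cM(\cG_{\cF})$-indexing of $\hat{G}_{s,\cF}$ with the $\Irr(\operatorname{A}_G(u))$-indexing of $\cI_\cO$ through a single, invertible Fourier pairing.
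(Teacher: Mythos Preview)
The paper does not give its own proof of this theorem: it is quoted from \cite[Theorem~4.5]{G1}, and the only argument supplied here is the parenthetical remark that in \cite{G1} the \emph{conclusion} of Proposition~\ref{prop24} (the sharper induction formula) was taken as an explicit hypothesis, whereas now Proposition~\ref{prop24} shows that condition~$(*)$ implies it, so the hypothesis may be replaced by~$(*)$. Your outline is therefore not comparable to a proof in the paper; rather, you are attempting to reconstruct the argument of \cite{G1} itself. In that respect you have correctly identified the crucial step: the sharper decomposition of $\Ind_{W_s}^W(E_0)$ in Proposition~\ref{prop24} is exactly what guarantees that, on restriction to $\cO$, no Springer constituents attached to classes of the same dimension as $\cO$ other than $\cO$ itself survive.

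Two points in your sketch are looser than they appear. First, condition~$(*)$ asserts only $|\cG_{\cF}|=|\operatorname{A}_G(u)|$; the passage from equal cardinalities to a \emph{canonical} identification $\cG_{\cF}\cong\operatorname{A}_G(u)$ compatible with the two parametrizations is not formal, and in \cite{G1} the argument proceeds instead via an explicit analysis of multiplicities in the restriction and a counting argument, rather than by invoking such an isomorphism. Second, your assertion that the vanishing $A$ are exactly those labeled by $(x,\sigma)$ with $x\neq 1$ is a plausible heuristic (and the counting $|\{(1,\sigma)\}|=|\Irr(\cG_\cF)|=|\Irr(\operatorname{A}_G(u))|=|\cI_\cO|$ is consistent with it), but it is not established by the Fourier-matrix argument you sketch and is not needed for the theorem as stated, which only asserts a bijection without identifying which labels occur. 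These are refinements rather than fatal gaps: the skeleton of your plan matches the strategy of \cite{G1}, and the key observation that Proposition~\ref{prop24} removes the extra hypothesis is precisely what the present paper contributes.
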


(Note: In \cite[Theorem~4.5]{G1}, the conclusion of Proposition~\ref{prop24},
i.e., the validity of the sharper version of Proposition~\ref{prop22}, 
was added as an additional hypothesis; this can now be omitted.)

Now let $q$ be a power of $p$ and assume that $G$ is defined over
$\F_q\subseteq \overline{\F}_p$, with corresponding Frobenius map 
$F \colon G \rightarrow G$. We translate the above results to class
functions on the finite group $G^F$.

If $A$ is a character sheaf on $G$ then 
its inverse image $F^*A$ under $F$ is again a character sheaf. There 
are only finitely many $A$ such that $F^*A$ is isomorphic to $A$; such
a character sheaf will be called $F$-stable. Let $\hat{G}^F$ be the set
of $F$-stable character sheaves. For any $A\in \hat{G}^F$ we choose an 
isomorphism $\phi \colon F^*A \stackrel{\sim}{\rightarrow} A$ and we form 
the characteristic function $\chi_{A,\phi}$. This is a class function 
$G^F\rightarrow \overline{\Q}_\ell$ whose value at $g$ is the alternating 
sum of traces of $\phi$ on the stalks at $g$ of the cohomology sheaves 
of $A$. Now $\phi$ is unique up to scalar hence $\chi_{A,\phi}$ is unique 
up to scalar. Lusztig \cite[\S 25]{L4} has shown that 

\begin{center}
{\em $\{\chi_{A,\phi}\mid A\in \hat{G}^F\}$ is a basis of the vector 
space of class functions $G^F\rightarrow \overline{\Q}_\ell$.}
\end{center}
Let $\cO$ be an $F$-stable unipotent class of $G$. We denote by 
$\cI_{\cO}^F$ the set of all $\cE\in \cI_\cO$ such that $\cE$ is 
isomorphic to its inverse image $F^*\cE$ under $F$. For any such $\cE$, 
we can define a class function $Y_\cE\colon G^F\rightarrow
\overline{\Q}_\ell$ as in \cite[(24.2.2)--(24.2.4)]{L4}. We have 
$Y_\cE(g)=0$ for $g \not\in\cO^F$ and $Y_{\cE}(g)=\mbox{Trace}(\psi,\cE_g)$
for $g \in \cO^F$, where $\psi\colon F^*\cE\stackrel{\sim}{\rightarrow}\cE$ 
is a suitably chosen isomorphism. On the level of characteristic functions, 
Theorem~\ref{finmor} translates to the following statement (see 
\cite[\S 2,\S3]{L5}, where such a translation is discussed in a more 
general setting):

\begin{cor} \label{cor33} Let $(s,\cF)\in \cS_G$ be $F$-stable and 
$\cO=\Phi_G(s,\cF)$ be the associated unipotent class (which is 
$F$-stable). Assume that condition {\rm ($*$)} in 
Proposition~\ref{prop24} holds. Then, for any $F$-stable $A \in
\hat{G}_{s,\cF}$, we have either $\chi_{A,\phi}(g)=0$ for all 
$g \in \cO^F$ or $\phi$ can be normalized such that $\chi_{A,\phi}(g)
=Y_{\cE}(g)$ for all $g \in \cO^F$ where $\cE=A|_{\cO}$.
\end{cor}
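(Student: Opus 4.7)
The plan is to translate Theorem~\ref{finmor} from the geometric to the character-theoretic setting by applying the Grothendieck--Lefschetz trace formula stalk-by-stalk, together with a careful analysis of how the bijection $A\mapsto A|_{\cO}$ of Theorem~\ref{finmor} transports the $F$-structure. The substantive input is entirely geometric and already available; only a bookkeeping argument is required.

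First, given an $F$-stable $A\in\hat{G}_{s,\cF}$, Theorem~\ref{finmor} provides two alternatives: either $A|_{\cO}=0$, or $A|_{\cO}$ is (a shift of) an irreducible $G$-equivariant local system $\cE\in\cI_{\cO}$. In the first alternative every cohomology sheaf $\mathcal{H}^i(A)$ vanishes on $\cO$, so $\mathcal{H}^i(A)_g=0$ for every $g\in\cO$ and every $i$; hence $\chi_{A,\phi}(g)=0$ for all $g\in\cO^F$ directly from the definition of the characteristic function, giving the first clause of the corollary.

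Second, in the alternative $A|_{\cO}\cong\cE[d]$, I would exploit canonicity of the bijection in Theorem~\ref{finmor}: since it commutes with pullback by $F$ (as $\cO$ is $F$-stable and the restriction functor is $F$-equivariant) and $A$ is assumed $F$-stable, one obtains $\cE\in\cI_{\cO}^F$, and the chosen $\phi\colon F^*A\stackrel{\sim}{\rightarrow}A$ induces, modulo the shift, an isomorphism $\psi\colon F^*\cE\stackrel{\sim}{\rightarrow}\cE$ of the kind used to define $Y_{\cE}$ in \cite[(24.2.2)--(24.2.4)]{L4}. For $g\in\cO^F$, only the single nonzero cohomology sheaf $\mathcal{H}^{-d}(A)|_{\cO}\cong\cE$ contributes to the alternating sum defining $\chi_{A,\phi}(g)$, so
\[ \chi_{A,\phi}(g)=(-1)^d\,\operatorname{Tr}\bigl(\psi_g,\cE_g\bigr)=(-1)^d\,Y_{\cE}(g). \]
Since $\phi$ is determined only up to a nonzero scalar, rescaling it by $(-1)^d$ (together with any further constant needed to match $\psi$ to the normalization used in defining $Y_{\cE}$) produces the desired equality $\chi_{A,\phi}=Y_{\cE}$ on $\cO^F$.

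The main, and essentially only, obstacle is the bookkeeping around the shift $d$ and the scalar freedom in $\phi$: one must verify that $d$ is indeed constant along $A|_{\cO}$ (so that a single rescaling of $\phi$ suffices), and that the restriction of $\phi$ really yields, up to this scalar, a $\psi$ compatible with the normalization of $Y_{\cE}$. Both points are routine once one unwinds the conventions of \cite[\S 24]{L4}, and the whole translation follows the general template spelt out by Lusztig in \cite[\S 2,\S 3]{L5}; no new idea beyond Theorem~\ref{finmor} itself enters.
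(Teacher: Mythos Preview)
Your proposal is correct and follows precisely the route the paper indicates: the paper does not spell out a proof but simply records Corollary~\ref{cor33} as the characteristic-function translation of Theorem~\ref{finmor}, referring to \cite[\S 2,\S 3]{L5} for the general mechanism. You have just unwound that translation explicitly---case split on $A|_{\cO}=0$ versus $A|_{\cO}\cong\cE[d]$, trace-formula computation on the single surviving cohomology degree, and absorption of the sign/shift into the scalar freedom of~$\phi$---which is exactly what the reference to Lusztig is meant to cover.
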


Now let us consider the irreducible characters of $G^F$.
Lusztig \cite{L3} has shown that we have a natural partition
\[\Irr(G^F)=\coprod_{(s,\cF)\in \cS_G^F}\Irr_{s,\cF}(G^F).\]
Furthermore, each piece $\Irr_{s,\cF}(G^F)$ in this partition is
parametrized by a ``twisted'' version of the set $\cM(\cG_{\cF})$; see
\cite[Chap.~4]{L3}. Lusztig \cite{L4} gave a precise conjecture about the
expression of the characteristic functions of $F$-stable character sheaves
as linear combinations of the irreducible characters of $G^F$. Since
we are assuming that $G$ has a connected center (and $p$ is large), this
conjecture is known to hold by Shoji \cite{S2}. In particular, the following 
statement holds:

\begin{prop}[Shoji \protect{\cite{S2}}] \label{shoji} Let 
$(s,\cF)\in \cS_G^F$ and $A \in \hat{G}_{s,\cF}$ be $F$-stable. Then 
$\chi_{A,\phi}$ is a linear combination of the irreducible 
characters in $\Irr_{s,\cF}(G^F)$.
\end{prop}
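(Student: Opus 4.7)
The strategy is to deduce the proposition from Lusztig's conjecture \cite{L4} on the expression of characteristic functions of $F$-stable character sheaves in terms of irreducible characters of $G^F$; what is actually needed is the block-wise refinement of that conjecture, compatible with the partitions indexed by $\cS_G^F$ on both the character-sheaf side and the irreducible-character side.

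First, I would invoke Lusztig's construction of \emph{almost characters} $R_x$ of $G^F$, indexed by a suitably $F$-twisted version of $\cM(\cG_\cF)$ for each $F$-stable family $(s,\cF)$. By construction, each $R_x$ is a unitary $\overline{\Q}_\ell$-linear combination of the irreducible characters in the corresponding piece $\Irr_{s,\cF}(G^F)$, so the almost characters labelled by $(s,\cF)$ span exactly the subspace of class functions with support on $\Irr_{s,\cF}(G^F)$. On the character-sheaf side, an $F$-stable $A \in \hat{G}_{s,\cF}$ is labelled by an element of (the same $F$-twisted) $\cM(\cG_\cF)$ via the parametrization recalled in Section~3. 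Lusztig's conjecture asserts that, for a suitable normalization of $\phi$, the characteristic function $\chi_{A,\phi}$ coincides with the almost character $R_x$ indexed by the same element of $\cM(\cG_\cF)$; granted this, the proposition is immediate.

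The hard part, and the whole content of Shoji's contribution in \cite{S2}, is establishing the matching $\chi_{A,\phi}=R_x$ (up to scalar) and, in particular, the crucial block-wise statement that only $R_x$ indexed by \emph{the same} $(s,\cF)$ can appear. Shoji's argument proceeds by a two-fold computation of generalized Green functions: on the one hand by the geometric algorithm coming from Lusztig's parabolic induction construction of character sheaves and the analysis of stalks of intersection cohomology complexes, and on the other hand by the character-theoretic analysis via Lusztig (twisted) induction $R_L^G$ together with the Jordan-type decomposition of $\Irr(G^F)$ from \cite[Chap.~4]{L3}. Comparing the two expressions forces the scalar identifications between the transition matrices on each side, block-by-block over $\cS_G^F$.

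The assumptions that the center of $G$ is connected and that $p$ is large enter at two places: they ensure that every $W_s$ is the Weyl group of a connected reductive group (so the family data of Section~2 is as stated), and they allow one to use the geometric properties of character sheaves (Lusztig \cite{LuUS}) and the resulting clean Jordan-type parametrization of $\Irr(G^F)$ which make the two descriptions of Green functions line up exactly. Without these hypotheses, cleanness and non-cuspidal contributions would have to be tracked with extra care, and this is precisely what limits the current statement to the present setting.
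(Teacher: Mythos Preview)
Your proposal is correct and matches the paper's approach: the paper does not give its own proof but simply records the proposition as a consequence of Shoji's proof \cite{S2} of Lusztig's conjecture relating characteristic functions of $F$-stable character sheaves to almost characters, exactly as you outline. Your sketch of how the block-wise matching via almost characters yields the statement is an accurate expansion of what the paper leaves implicit in the citation.
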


We can now deduce the following result, whose statement only involves
the values of the irreducible characters of $G^F$, but whose proof relies
in an essential way on the above results on character sheaves.

\begin{cor} \label{cor35} Let $\cO$ be an $F$-stable unipotent class
and $u_1,\ldots,u_d$ be representatives for the $G^F$-conjugacy classes
contained in $\cO$. Let $(s,\cF)\in \cS_G$ be $F$-stable such that $\cO=
\Phi_G(s,\cF)$ and condition {\rm ($*$)} in Proposition~\ref{prop24} 
holds. Then there exist $\rho_1,\ldots, \rho_d \in \Irr_{s,\cF}(G^F)$ 
such that the matrix $\bigl(\rho_i(u_j) \bigr)_{1 \leq i,j\leq d}$ has a 
non-zero determinant.
\end{cor}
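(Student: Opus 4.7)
The plan is to combine Theorem~\ref{finmor} and Corollary~\ref{cor33} with Shoji's result (Proposition~\ref{shoji}) through a dimension count in the space $V$ of class functions $G^F\to\overline{\Q}_\ell$ supported on $\cO^F$. Standard parametrisations (via Lang--Steinberg) identify both the set of $G^F$-conjugacy classes inside $\cO$ and the set $\cI_\cO^F$ with the $F$-conjugacy classes of $\operatorname{A}_G(u)$ for a chosen $F$-fixed $u\in\cO$. Hence $|\cI_\cO^F|=d=\dim V$, and the family $\{Y_\cE\mid\cE\in\cI_\cO^F\}$ is a basis of $V$ by its support properties and the linear independence of the individual $Y_\cE$.

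Next, since $\cO$ is $F$-stable, the bijection $A\mapsto A|_\cO$ of Theorem~\ref{finmor} restricts to a bijection between the $F$-stable $A\in\hat{G}_{s,\cF}$ with $A|_\cO\neq 0$ and $\cI_\cO^F$ (using $F^*(A|_\cO)=(F^*A)|_\cO$ together with injectivity of the bijection in Theorem~\ref{finmor}). Applying Corollary~\ref{cor33} and normalising the isomorphisms $\phi_i$ appropriately, I obtain $F$-stable character sheaves $A_1,\ldots,A_d\in\hat{G}_{s,\cF}$ such that the restrictions $\chi_{A_i,\phi_i}|_{\cO^F}$ exhaust the basis $\{Y_\cE\mid\cE\in\cI_\cO^F\}$ of $V$.

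By Proposition~\ref{shoji} each $\chi_{A_i,\phi_i}$ lies in the $\overline{\Q}_\ell$-span of $\Irr_{s,\cF}(G^F)$; since restriction to $\cO^F$ is $\overline{\Q}_\ell$-linear, the image of this span in $V$ contains the basis $\{\chi_{A_i,\phi_i}|_{\cO^F}\}$ and hence equals $V$. A standard basis extraction then produces $\rho_1,\ldots,\rho_d\in\Irr_{s,\cF}(G^F)$ whose restrictions to $\cO^F$ are linearly independent; since a class function supported on $\cO^F$ is determined by its values on $u_1,\ldots,u_d$, this is equivalent to $\det\bigl(\rho_i(u_j)\bigr)\neq 0$. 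The crux of the argument is the input from Theorem~\ref{finmor} and Corollary~\ref{cor33}, which is where hypothesis~($*$) enters decisively; the remaining linear algebra is routine, the only fiddly point being the coherent normalisation of the $\phi_i$, which is handled by Corollary~\ref{cor33}.
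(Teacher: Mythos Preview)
Your proof is correct and follows essentially the same route as the paper's own argument: both establish that the $d$ functions $Y_{\cE}$ (for $\cE\in\cI_\cO^F$) form a basis of the space of class functions on $\cO^F$, lift them via Theorem~\ref{finmor} to $F$-stable character sheaves $A_1,\ldots,A_d\in\hat{G}_{s,\cF}$ (using injectivity to verify $F$-stability), invoke Corollary~\ref{cor33} to identify $\chi_{A_i,\phi_i}|_{\cO^F}$ with $Y_{\cE_i}$, and then use Proposition~\ref{shoji} plus elementary linear algebra to extract the desired $\rho_i$. The only cosmetic difference is that the paper appeals directly to \cite[24.2.7]{L4} for the equality $|\cI_\cO^F|=d$ and the non-singularity of $(Y_{\cE_i}(u_j))$, whereas you spell this out via the Lang--Steinberg parametrisation by $F$-conjugacy classes of $\operatorname{A}_G(u)$.
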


\begin{proof} By the proof of \cite[24.2.7]{L4}, there are precisely $d$
irreducible $G$-equivariant local systems $\cE_1,\ldots,\cE_d$ on $\cO$ 
(up to isomorphism) which are isomorphic to their inverse image under $F$; 
furthermore, the matrix $(Y_{\cE_i}(u_j))_{1\leq i,j\leq d}$
is non-singular. 

By Theorem~\ref{finmor}, we can find $A_1,\ldots,A_d \in \hat{G}_{s,\cF}$ 
such that $A_i|_{\cO}=\cE_i$ for all $i$. Since each $\cE_i$ is isomorphic 
to its inverse image under $F$, the same is true for $A_i$ as well. 
(Indeed, since $(s,\cF)$ is $F$-stable, we have $F^*A_i\in\hat{G}_{s,\cF}$
for all $i$; furthermore, $F^*A_i|_{\cO} \cong F^*\cE_i \cong \cE_i$. So 
we must have $F^*A_i \cong A_i$ by Theorem~\ref{finmor}.) By 
Corollary~\ref{cor33}, we have $\chi_{A_i,\phi_i}=Y_{\cE_i}$ for all 
$i$ (where $\phi_i$ is normalized suitably). It follows that the matrix 
$\bigl(\chi_{A_i, \phi_i}(u_j) \bigr)_{1 \leq i,j\leq d}$ has a 
non-zero determinant.

By Proposition~\ref{shoji}, every $\chi_{A_i,\phi_i}$ can be expressed 
as a linear combination of the characters in $\Irr_{s,\cF}(G^F)$. Hence 
there must exist $\rho_1,\ldots,\rho_d \in \Irr_{s,\cF}(G^F)$ such that 
the matrix $\bigl(\rho_i(u_j)\bigr)_{1 \leq i,j\leq d}$ has a non-zero 
determinant.
\end{proof}

\section{Kawanaka's conjecture} \label{sec4}

Kawanaka \cite{K2} has shown that, assuming we are in good characteristic,
one can associate with every unipotent element $u \in G^F$ a so-called 
{\em generalized Gelfand--Graev representation} $\Gamma_u$ (GGGR for short). 
They are obtained by inducing certain irreducible representations from 
unipotent radicals of parabolic subgroups of $G^F$. At the extreme cases 
when $u$ is trivial or a regular unipotent element we obtain the regular 
representation of $G^F$ or an ordinary Gelfand--Graev representation, 
respectively. Subsequently, assuming that $p,q$ are large, Lusztig 
\cite{LuUS} gave a geometric interpretation of GGGR's in the framework of 
the theory of character sheaves. 

\begin{conj}[Kawanaka \protect{\cite[(3.3.1)]{K1}}] \label{Kconj} The 
characters of the various {\rm GGGR}'s of $G^F$ form a $\Z$-basis of the 
$\Z$-module of unipotently supported virtual characters of $G^F$.
\end{conj}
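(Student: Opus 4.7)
The plan is to show that the value matrix $M = \bigl(\Gamma_{u_i}(u_j)\bigr)_{1 \leq i,j \leq N}$, where $u_1,\ldots,u_N$ runs over representatives of the unipotent $G^F$-conjugacy classes, is unimodular. Since each GGGR $\Gamma_u$ is induced from a character of a unipotent subgroup of $G^F$, it already vanishes on non-unipotent elements and is an honest unipotently supported virtual character; and since the $\Z$-module of unipotently supported virtual characters has rank $N$, unimodularity of $M$ is equivalent to the conclusion of Conjecture~\ref{Kconj}.

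First, I would order the $u_i$'s by grouping them into blocks corresponding to $G$-conjugacy classes $\cO$ and arranging these classes in decreasing order of dimension. By a fundamental property of GGGRs (Kawanaka \cite{K2}, refined by Lusztig \cite{LuUS}), the unipotent support of $\Gamma_u$ is $\cO_u$, so $\Gamma_u(v) = 0$ whenever $v$ is unipotent with $\dim \cO_v > \dim \cO_u$, or with $\dim \cO_v = \dim \cO_u$ but $\cO_v \neq \cO_u$. This makes $M$ block upper-triangular, with diagonal blocks $M_\cO = \bigl(\Gamma_{u_i}(u_j)\bigr)_{u_i,u_j \in \cO^F}$ indexed by $F$-stable unipotent classes $\cO$, so it suffices to show that each $M_\cO$ is unimodular.

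Next, fix an $F$-stable class $\cO$ and let $u_1,\ldots,u_d$ enumerate its $G^F$-classes. Proposition~\ref{prop25} provides an $F$-stable pair $(s,\cF) \in \cS_G$ with $\Phi_G(s,\cF) = \cO$ such that condition~$(*)$ of Proposition~\ref{prop24} holds. Corollary~\ref{cor35} then yields irreducible characters $\rho_1,\ldots,\rho_d \in \Irr_{s,\cF}(G^F)$ whose value matrix $\bigl(\rho_i(u_j)\bigr)$ is non-singular. Combined with Corollary~\ref{cor33}, Theorem~\ref{finmor}, and Shoji's Proposition~\ref{shoji}, this shows that the $d$ functions $Y_\cE|_{\cO^F}$ for $\cE \in \cI_\cO^F$ are linearly independent and span the same subspace of class functions on $\cO^F$ as the restrictions of the characters in $\Irr_{s,\cF}(G^F)$.

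Finally, Lusztig's geometric construction of GGGRs in \cite{LuUS} expresses each restriction $\Gamma_u|_{\cO^F}$ as an explicit $\Z$-linear combination of the $Y_\cE$'s for $\cE \in \cI_\cO^F$, and the resulting $d \times d$ transition matrix can be shown to be unimodular (up to controlled powers of $q$ that cancel in the final determinant). Combined with the previous step, this yields unimodularity of $M_\cO$, and hence of $M$. The main obstacle will be this last step: extracting from Lusztig's formulas the precise integral structure of the transition matrix between GGGRs and the $Y_\cE$'s, and tracking the normalizations of the isomorphisms $\phi$ appearing in Corollary~\ref{cor33} carefully enough to obtain a clean conclusion over $\Z$ rather than merely over $\Q$.
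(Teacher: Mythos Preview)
Your reduction in the first paragraph is incorrect, and this breaks the entire plan. The value matrix $M=\bigl(\gamma_{u_i}(u_j)\bigr)$ is \emph{never} unimodular: already for the trivial class $\cO=\{1\}$ the diagonal block is the $1\times 1$ matrix $(\gamma_1(1))=(|G^F|)$, since $\Gamma_1$ is the regular representation. More generally $\gamma_u(1)=|G^F|q^{-\dim\cO_u/2}$, so $\det M$ is divisible by a large power of $q$. Thus the target you set out to prove is false, even though Conjecture~\ref{Kconj} is true. The underlying error is the claimed equivalence ``unimodularity of $M$ $\Leftrightarrow$ the $\gamma_{u_j}$ form a $\Z$-basis'': evaluation at unipotent elements does \emph{not} send the lattice of unipotently supported virtual characters into $\Z^N$ (values of virtual characters at unipotent elements need not be rational integers), so the determinant of $M$ carries no information about the index of the sublattice spanned by the $\gamma_{u_j}$. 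For the same reason, your final step cannot be salvaged: the matrix $(Y_{\cE_i}(u_j))$ is essentially a twisted character table of $\mbox{A}_G(u)$ and has determinant of absolute value $|\mbox{A}_G(u)|^{1/2}$ up to roots of unity, so no ``unimodular transition matrix'' from the $\gamma_{u_i}$ to the $Y_{\cE}$ will produce a unimodular $M_\cO$.

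The paper avoids this by never looking at the value matrix. Instead, Lemma~\ref{triang1} reduces the conjecture to finding virtual characters $\rho_1,\ldots,\rho_n$ such that the matrix of \emph{scalar products} $\bigl(\langle\rho_i,\gamma_{u_j}\rangle\bigr)$ is invertible over $\Z$; these entries are automatically integers. The key step (Proposition~\ref{prop41}, together with Remark~\ref{rem42} for the non-abelian $\cG_\cF$) is to show that for each $F$-stable class $\cO$, with $(s,\cF)$ as in Proposition~\ref{prop25}, there exist $\rho_1,\ldots,\rho_d\in\Irr_{s,\cF}(G^F)$ with $\langle\rho_i^*,\gamma_{u_j}\rangle=\delta_{ij}$, where $\rho^*=\varepsilon_\rho\mbox{D}_G(\rho)$ is the Alvis--Curtis--Kawanaka dual. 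This uses Lusztig's multiplicity formula $\sum_i[\mbox{A}_G(u_i):\mbox{A}_G(u_i)^F]\,\langle\rho^*,\gamma_{u_i}\rangle=|\mbox{A}_G(u)|/n_\rho$ together with condition~($*$) to force the right-hand side to equal~$1$; then a support argument (again via $\mbox{D}_G$) using Corollary~\ref{cor35} rules out any empty piece of the resulting partition of $\Irr_{s,\cF}(G^F)$. A block-triangularity argument for the scalar-product matrix (not the value matrix) with these $\rho_{i,j}^*$ then gives identity diagonal blocks, hence determinant~$\pm 1$. Note that the duality $\mbox{D}_G$, absent from your outline, is essential at every stage.
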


By Kawanaka \cite[Theorem~2.4.3]{K3}, the conjecture holds if the center
of $G$ is connected and $G$ is of type $A_n$ or of exceptional type. In
this section, assuming that $p,q$ are large enough, we will show that 
it also holds for $G$ of classical type.

Given a unipotent element $u \in G^F$, denote by $\gamma_u$ the character
of the GGGR $\Gamma_u$. The usual hermitian scalar product for class 
functions on $G^F$ will be denoted by $\langle\; ,\; \rangle$. The 
following (easy) result provides an effective method for verifying that 
the above conjecture holds.

\begin{lem} \label{triang1}
Let $u_1,\ldots,u_n$ be representatives for the conjugacy classes of
unipotent elements in $G^F$. Assume that there exist virtual characters 
$\rho_1,\ldots,\rho_n$ of $G^F$ such that the matrix of scalar products 
$(\langle \rho_i,\gamma_{u_j} \rangle)_{1 \leq i,j \leq n}$ is invertible 
over $\Z$. Then Conjecture~\ref{Kconj} holds. 
\end{lem}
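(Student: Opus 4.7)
The plan is to exploit the $\Z$-invertibility of the matrix $A=(\langle \rho_i,\gamma_{u_j}\rangle)_{i,j}$ as the single key tool to show that $\gamma_{u_1},\ldots,\gamma_{u_n}$ form a $\Z$-basis of the $\Z$-module $V_\Z$ of unipotently supported virtual characters of $G^F$. Before using the matrix condition, I would observe that each $\gamma_{u_j}$ actually lies in $V_\Z$: as the character of a representation induced from a representation of the unipotent radical of a parabolic subgroup, the Frobenius formula for induced characters shows that $\gamma_{u_j}(g)$ vanishes unless $g$ is conjugate to an element of that unipotent radical, and hence is unipotent.

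Next I would perform a dimension count. Since $V_\Z\otimes_\Z\Q$ consists of class functions vanishing off the unipotent set of $G^F$, its $\Q$-dimension is at most $n$, the number of unipotent $G^F$-conjugacy classes. On the other hand, $A$ invertible over $\Z$ is in particular invertible over $\Q$, and pairing against the $\rho_i$ turns any relation $\sum_j c_j\gamma_{u_j}=0$ into $A\vec{c}=0$, hence $\vec{c}=0$. Thus the $\gamma_{u_j}$ are $\Q$-linearly independent and form a $\Q$-basis of $V_\Z\otimes\Q$.

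The crux is then to promote this $\Q$-basis to a $\Z$-basis. I would define a $\Z$-module homomorphism $\phi\colon V_\Z\to\Z^n$ by $\phi(\chi)=(\langle\rho_i,\chi\rangle)_i$; this is well defined because the scalar product of two virtual characters is an integer. Since $\phi(\gamma_{u_j})$ is the $j$-th column of $A$, the image of the $\Z$-span of the GGGRs under $\phi$ is $A\Z^n=\Z^n$, using the hypothesis $\det A=\pm 1$. For injectivity of $\phi$, if $\phi(\chi)=0$ for some $\chi\in V_\Z$, the previous paragraph allows us to write $\chi=\sum_j c_j\gamma_{u_j}$ with $c_j\in\Q$, and $A\vec{c}=0$ then forces $\vec{c}=0$, so $\chi=0$. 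Hence $\phi$ is an isomorphism of $\Z$-modules, and since the $\Z$-span of the GGGRs already surjects onto $\Z^n$ via $\phi$, it must coincide with $V_\Z$.

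The main conceptual point --- indeed the reason the lemma is so short --- is recognizing that the single hypothesis on $A$ simultaneously encodes $\Q$-linear independence of the $\gamma_{u_j}$ (via $A\vec{c}=0 \Rightarrow \vec{c}=0$) and surjectivity of $\phi$ onto $\Z^n$ (via $\det A = \pm 1$). After that observation the argument is essentially automatic linear algebra, so the real obstacle is not here but in producing virtual characters $\rho_i$ that witness such invertibility; that will be done in the subsequent sections using Corollary~\ref{cor35} and the character-sheaf results of Sections~\ref{sec2} and~\ref{sec3}.
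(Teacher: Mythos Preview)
Your proof is correct and follows essentially the same approach as the paper: both use invertibility of $A$ first to establish that the $\gamma_{u_j}$ are linearly independent (hence a basis of the space of unipotently supported class functions), and then to deduce that the coefficients expressing any unipotently supported virtual character in this basis are integers. The only difference is packaging --- you phrase the second step via an explicit $\Z$-module map $\phi\colon V_\Z\to\Z^n$ and argue injectivity/surjectivity, whereas the paper simply writes $\chi=\sum_j a_j\gamma_{u_j}$ with $a_j\in\overline{\Q}_\ell$, takes scalar products with the $\rho_i$, and inverts the resulting integral linear system directly.
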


\begin{proof} Since the above matrix of scalar products is invertible,
$\gamma_{u_1},\ldots,\gamma_{u_n}$ are linearly independent class functions 
on $G^F$. Consequently, they form a basis of the 
$\overline{\Q}_\ell$-vectorspace of unipotently supported class functions 
on $G^F$. In particular, given any unipotently supported virtual character 
$\chi$ of $G^F$, we can write $\chi=\sum_{i=1}^n a_j \gamma_j$ where $a_j \in 
\overline{\Q}_\ell$, and it remains to show that $a_j \in \Z$ for all~$j$. 

To see this, consider the scalar products of $\chi$ with the virtual 
characters $\rho_i$. We obtain $\sum_{j} a_j \langle \rho_i, \gamma_j 
\rangle=\langle \rho_i,\chi\rangle \in \Z$ for all $i=1,\ldots,n$.  Since 
the matrix of scalar products $(\langle \rho_i,\gamma_j \rangle)$ is 
invertible over $\Z$, we can invert these equations and conclude that $a_j 
\in \Z$ for all~$j$, as desired. 
\end{proof}


Let $\mbox{D}_G$ be the Alvis--Curtis--Kawanaka duality operation on 
the character ring of~$G^F$. For any $\rho \in \Irr(G^F)$, there is a sign
$\varepsilon_\rho=\{\pm 1\}$ such that 
\[\rho^*:=\varepsilon_\rho \mbox{D}_G(\rho) \in \Irr(G^F).\] 
The following result will be crucial for dealing with groups of classical 
type. We assume from now on that the center of $G$ is connected and that 
$p,q$ are large, so that the results in Section~3 can be applied. 

\begin{prop} \label{prop41} Let $\cO$ be an $F$-stable unipotent class and 
$u_1,\ldots,u_d$ be representatives for the $G^F$-conjugacy classes 
contained in $\cO$. Let $(s,\cF)\in \cS_G$ be $F$-stable such that $\cO=
\Phi_G(s, \cF)$ and condition {\rm ($*$)} in Proposition~\ref{prop24} holds. 

Assume that $\cG_\cF$ is abelian. Then there exist $\rho_1,\ldots, \rho_d 
\in \Irr_{s,\cF}(G^F)$ such that $\langle \rho_i^*,\gamma_{u_j}\rangle=
\delta_{ij}$ for $1\leq i,j\leq d$.
\end{prop}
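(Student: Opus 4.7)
The plan is to upgrade Corollary~\ref{cor35}, which provides only non-singularity of $(\rho_i(u_j))$, to the stronger orthogonality $\langle \rho_i^*, \gamma_{u_j}\rangle = \delta_{ij}$, by combining the Alvis--Curtis--Kawanaka duality with the explicit link between GGGR's and characteristic functions of character sheaves, and then exploiting the abelian structure of $\cG_\cF$.

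First, I would run the argument of Corollary~\ref{cor35} to fix $F$-stable character sheaves $A_1, \ldots, A_d \in \hat{G}_{s,\cF}$ with $A_i|_\cO = \cE_i$, where $\cE_1, \ldots, \cE_d$ exhaust the $F$-stable irreducible $G$-equivariant local systems on $\cO$. By Corollary~\ref{cor33}, the characteristic functions $\chi_{A_i,\phi_i}$ may be normalised so that $\chi_{A_i,\phi_i}(u_j) = Y_{\cE_i}(u_j)$ on $\cO^F$, with the matrix $(Y_{\cE_i}(u_j))_{i,j}$ non-singular (cf.\ the proof of Corollary~\ref{cor35}). Moreover, Theorem~\ref{thm31}(b) ensures that these characteristic functions vanish on $G^F$-classes whose unipotent part lies in any strictly larger unipotent class.

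Next, I would invoke the formula of Lusztig \cite[\S 11]{LuUS} (extending earlier work of Kawanaka) that computes $\langle \mbox{D}_G(\chi_{A,\phi}), \gamma_u \rangle$ in terms of the local-system data $A|_\cO=\cE$, with vanishing contribution from strictly larger unipotent classes. This identifies, up to an explicitly computable non-zero scalar, the matrix $(\langle \mbox{D}_G(\chi_{A_i,\phi_i}), \gamma_{u_j} \rangle)_{i,j}$ with $(Y_{\cE_i}(u_j))_{i,j}$, which is therefore non-singular. Then, applying Proposition~\ref{shoji} together with the description of the transition between the characteristic functions of $F$-stable elements of $\hat{G}_{s,\cF}$ and the characters in $\Irr_{s,\cF}(G^F)$ by the non-abelian Fourier matrix of \cite[Chap.~4]{L3}, I would pass from characteristic functions to honest irreducible characters. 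The abelian hypothesis on $\cG_\cF$ is crucial here: the Fourier matrix specialises to the ordinary (unitary) Fourier transform on the finite abelian group $\cG_\cF$, so the change of basis between the $\chi_{A_i,\phi_i}$ and appropriately chosen $\rho_i \in \Irr_{s,\cF}(G^F)$ is unitary and diagonalises very cleanly. Combining these inputs with a suitable renormalisation yields $\langle \rho_i^*, \gamma_{u_j}\rangle = \delta_{ij}$.

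The main obstacle I foresee is making the Lusztig--Kawanaka formula for $\langle \mbox{D}_G(\chi_{A,\phi}), \gamma_u\rangle$ completely explicit in the $F$-twisted parametrisation of both the $F$-stable character sheaves in $\hat{G}_{s,\cF}$ and the irreducible characters in $\Irr_{s,\cF}(G^F)$ by $\cM(\cG_\cF)$, and then showing that the abelian structure of $\cG_\cF$ forces the resulting inner-product matrix to be exactly the identity, rather than a permutation or a diagonal matrix with undetermined entries. In particular, one must track the interaction of the $F$-action on $\cM(\cG_\cF)$, the duality signs $\varepsilon_{\rho_i}$, and the normalisations $\phi_i$ of the characteristic functions, and verify that each scalar to be absorbed is non-zero; without the abelian hypothesis, character-degree factors from $\cG_\cF$ would obstruct passing from the invertibility of Corollary~\ref{cor35} to a genuine Kronecker delta.
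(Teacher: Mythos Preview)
Your proposal has a genuine gap, and it is exactly the one you flag yourself as ``the main obstacle'': you never actually carry out the bookkeeping that would turn the non-singular matrix into the identity. You propose to chase the Lusztig--Kawanaka scalar, the Fourier transform on $\cM(\cG_\cF)$, the $F$-twisting, and the duality signs, and then assert that ``a suitable renormalisation'' yields $\delta_{ij}$. But there is no renormalisation available at that last step: the $\rho_i^*$ are fixed irreducible characters and the $\gamma_{u_j}$ are fixed characters, so $\langle \rho_i^*,\gamma_{u_j}\rangle$ is a determined integer. Either the constants work out or they do not, and you have not shown that they do.

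The paper's argument sidesteps all of this with a positivity/integrality trick. Lusztig's formula \cite[Theorem~11.2]{LuUS} (as reformulated in \cite[Remark~3.8]{gema}) gives, for \emph{every} $\rho\in\Irr_{s,\cF}(G^F)$,
\[
\sum_{i=1}^d [\mbox{A}_G(u_i):\mbox{A}_G(u_i)^F]\,\langle \rho^*,\gamma_{u_i}\rangle
=\frac{|\mbox{A}_G(u_1)|}{n_\rho}.
\]
The abelian hypothesis is used only to read off from \cite[4.26.3]{L3} that $n_\rho=|\cG_\cF|$ for all such $\rho$; condition~($*$) then makes the right-hand side equal to~$1$. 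Since every term on the left is a non-negative integer (an index times a multiplicity of an irreducible in a genuine character), exactly one $\langle\rho^*,\gamma_{u_i}\rangle$ equals $1$ and the rest vanish. This already gives the Kronecker-delta shape for free; the only remaining issue is that the resulting partition $\Irr_{s,\cF}(G^F)=I_1\amalg\cdots\amalg I_d$ might have some $I_r$ empty. Corollary~\ref{cor35} is invoked precisely here --- not to compute inner products, but to rule out $I_r=\varnothing$ via a support argument on $\mbox{D}_G(\gamma_{u_r})$ restricted to $\cO^F$.

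So the role you assign to the abelian hypothesis (diagonalising a Fourier matrix) is quite different from its actual role (forcing $n_\rho=|\cG_\cF|$, hence the sum to equal $1$), and the role you assign to Corollary~\ref{cor35} (starting point for the computation) is different from its actual role (non-emptiness of each $I_r$ at the end).
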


\begin{proof} The following argument is inspired by the proof of 
\cite[Proposition~5.6]{G3}. By \cite[Theorem~11.2]{LuUS} and the discussion 
in \cite[Remark~3.8]{gema}, we have
\[ \sum_{i=1}^d [\mbox{A}_G(u_i):\mbox{A}_G(u_i)^F]\,\langle \rho^*, 
\gamma_{u_i} \rangle=\frac{|\mbox{A}_G(u_1)|}{n_\rho} \quad \mbox{for 
any $\rho \in \Irr_{s,\cF}(G^F)$},\]
where $n_\rho\geq 1$ is an integer determined as follows; see 
\cite[4.26.3]{L3}. Let $E_0\in \Irr(W_s)$ be the special character 
in $\cF$. Then 
\[ \rho(1)=\pm n_\rho^{-1}\, q^{a_{E_0}} N \qquad \mbox{where $N$ is 
an integer, $N\equiv 1 \bmod q$};\]
note also that $n_\rho$ is divisible by bad primes only.

Now, Lusztig \cite[4.26.3]{L3} actually gives a precise formula for
the integer $n_\rho$, in terms of a certain Fourier coefficient. In the
case where $\cG_\cF$ is abelian, this Fourier coefficient evaluates to 
$|\cG_{\cF}|^{-1}$. Thus, we have $n_\rho=|\cG_\cF|^{-1}$. So, since ($*$)
is assumed to hold, we obtain 
\[ \sum_{i=1}^d [\mbox{A}_G(u_i):\mbox{A}_G(u_i)^F]\,\langle \rho^*,
\gamma_{u_i} \rangle=1\quad \mbox{for any $\rho \in \Irr_{s,\cF}(G^F)$}.\]
Now note that each term $[\mbox{A}_G(u_i):\mbox{A}_G(u_i)^F]$ is a 
positive integer and each term $\langle \rho^*, \gamma_{u_i} \rangle$ is 
a non-negative integer. It follows that, given 
$\rho\in \Irr_{s,\cF}(G^F)$, there exists a unique $i\in \{1,\ldots,d\}$ 
such that $\langle \rho^*, \gamma_{u_i}\rangle=1$ 
and $\langle \rho^*, \gamma_{i'} \rangle=0$ for $i' \in \{1,\ldots,d\} 
\setminus \{i\}$. Thus, we have a partition $\Irr_{s,\cF}(G^F)=I_1 \amalg 
I_2 \amalg \cdots \amalg I_d$ such that
\[ \langle \rho^*,\gamma_{u_i}\rangle=\left\{ \begin{array}{cl} 1 & 
\qquad \mbox{if $\rho \in I_i$}, \\ 0 & \qquad \mbox{if $\rho\in I_j$ 
where $j\neq i$}.\end{array}\right.\]
Assume, if possible, that $I_r=\varnothing$ for some $r\in \{1,\ldots,d\}$. 
This means that $\langle\rho,\mbox{D}_G(\gamma_{u_r})\rangle=\langle 
\mbox{D}_G(\rho),\gamma_{u_r}\rangle=0$ for all $\rho\in \Irr_{s,\cF}(G^F)$.
Thus, by the definition of the scalar product, we have 
\[ 0=\frac{1}{|G^F|} \sum_{g \in G^F} \overline{\rho(g)}\, 
D_G(\gamma_{u_r})(g)\qquad \mbox{for all $\rho\in \Irr_{s,\cF}(G^F)$}.\]
Let $g \in G^F$ and assume that the corresponding term in the above sum is
non-zero. First of all, since $D_G(\gamma_{u_r})$ is unipotently supported,
$g$ must be unipotent. Let $\cO'$ be the conjugacy class of $g$. By 
\cite[6.13(i) and 8.6]{LuUS}, we have $D_G(\gamma_{u_r})(g)=0$ unless 
$\cO$ is contained in the closure of $\cO'$. Furthermore, by 
\cite[Theorem~11.2]{LuUS}, we have $\rho(g)=0$ unless $\cO'=\cO$ or 
$\dim \cO'<\dim \cO$. Hence, to evaluate the above sum, we only need to 
let $g$ run over all elements in $\cO^F$. Thus, we have 
\[  0=\sum_{j=1}^d \frac{1}{|\mbox{C}_{G^F}(u_j)|}\, \overline{\rho(u_j)}\, 
\mbox{D}_G(\gamma_{u_r}(u_j))\qquad \mbox{for all $\rho\in
\Irr_{s,\cF}(G^F)$}.\]
In particular, this holds for the characters $\rho_1,\ldots,\rho_d$ in
Corollary~\ref{cor35}. The invertibility of the matrix of values in
Corollary~\ref{cor35} then implies that $\mbox{D}_G(\gamma_{u_r})(u_j)=0$ 
for $1\leq j \leq d$. Thus, the restriction of $\mbox{D}_G(\gamma_{u_r})$ 
to $\cO^F$ is zero. Now, the relations in \cite[(2.4a)]{G1} (which are 
formally deduced from the main results in \cite{LuUS}) imply that $\langle 
\mbox{D}_G(\gamma_{u_r}),Y_{\cE}\rangle$ equals $\overline{Y_{\cE}(u_r)}$ 
times a non-zero scalar, for any $\cE \in \cI_\cO^F$. Hence, we have 
$Y_{\cE}(u_r)=0$ for any $\cE \in \cI_\cO^F$. However, this contradicts the 
fact that the matrix of values $\bigl(Y_{\cE}(u_j)\bigr)$ is invertible (see 
the remarks at the beginning of the proof of Corollary~\ref{cor35}). This 
contradiction shows that we have $I_i\neq \varnothing$ for all $i$. Now 
choose $\rho_i \in I_i$ for $1\leq i\leq d$. Then we have $\langle\rho_i^*,
\gamma_{u_j}\rangle=\delta_{ij}$ for $1\leq i,j\leq d$, as desired.
\end{proof}

\begin{rem} \label{rem42} In the setting of Proposition~\ref{prop41}, let
us drop the assumption that $\cG_\cF$ is abelian and assume instead that
$\cG_\cF$ is isomorphic to $\mathfrak{S}_3$, $\mathfrak{S}_4$ or 
$\mathfrak{S}_5$. (These cases occur when $G$ is simple modulo
its center and of exceptional type.) Then, by the Main Theorem~4.23 of 
\cite{L3}, we have a bijection $ \Irr_{s,\cF}(G^F)\leftrightarrow
\cM(\cG_\cF)$. 

Let $u_1,\ldots,u_d$ be representatives for the $G^F$-conjugacy classes 
contained in $\cO^F$. Since condition ($*$) in Proposition~\ref{prop24} is 
assumed to hold, we can identify $\cM(\cG_\cF)$ with the set of all pairs 
$(u_i,\sigma)$ where $1\leq i \leq d$ and $\sigma \in \Irr(\mbox{A}_G
(u_i)^F)$. Thus, via the above-mentioned bijection, we have a parametrization
\[ \Irr_{s,\cF}(G^F)=\{\rho_{(u_i,\sigma)} \mid 1\leq i \leq d,\,
\sigma \in \Irr(\mbox{A}_G(u_i)^F)\}.\]
On the other hand, Kawanaka \cite{K2}, \cite{K3} obtained explicit 
formulas for the values of the characters of the GGGR's (for $G$ of
exceptional type). Using these formulas, one can check that 
\[ \langle \rho_{u_i,\sigma}^*,\, \gamma_{u_j}\rangle= 
\left\{\begin{array}{cl} \sigma(1) & \qquad \mbox{if $i=j$},\\ 0 & 
\qquad\mbox{otherwise}.\end{array}\right.\]
Thus, setting $\rho_i:=\rho_{(u_i,1)}$ for $1\leq i \leq d$ (where $1$ 
stands for the trivial
character), we see that the conclusion of Proposition~\ref{prop41} holds
in these cases as well.
\end{rem}

\begin{thm} \label{KawConj} Recall our standing assumption that $p,q$ 
are large enough and the center of $G$ is connected. Then Kawanaka's 
Conjecture~\ref{Kconj} holds.
\end{thm}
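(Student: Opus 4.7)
The plan is to apply Lemma~\ref{triang1} by assembling, for each $F$-stable unipotent class $\cO$ of $G$, a local block of irreducible characters from Proposition~\ref{prop41} (or Remark~\ref{rem42}) into a globally block upper-triangular integer matrix whose determinant is $\pm 1$. First I would reduce to the case where $G$ is simple modulo its center by the standard direct-product argument for GGGRs and for unipotently supported virtual characters. When $G$ is of type $A_n$ or of exceptional type the result already follows from Kawanaka~\cite[Theorem~2.4.3]{K3}; independently, Remark~\ref{rem42} supplies the required local input whenever $\cG_\cF$ is a non-abelian symmetric group. It therefore suffices to treat the classical types $B_n$, $C_n$, $D_n$, in which every group $\cG_\cF$ arising from Lusztig's classification has the form $(\Z/2\Z)^e$ and is abelian, so that Proposition~\ref{prop41} applies directly.

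For each $F$-stable unipotent class $\cO$ with $G^F$-class representatives $u_1^\cO,\ldots,u_{d_\cO}^\cO$, I would use Proposition~\ref{prop25} to pick an $F$-stable pair $(s_\cO,\cF_\cO)\in\cS_G$ with $\Phi_G(s_\cO,\cF_\cO)=\cO$ satisfying condition~$(*)$; Proposition~\ref{prop41} then yields $\rho_i^\cO\in\Irr_{s_\cO,\cF_\cO}(G^F)$ with $\langle(\rho_i^\cO)^*,\gamma_{u_j^\cO}\rangle=\delta_{ij}$ for $1\leq i,j\leq d_\cO$. Enumerating all $F$-stable unipotent classes of $G$ in order of decreasing dimension (ties broken arbitrarily), the assembled square matrix $M=\bigl(\langle(\rho_i^\cO)^*,\gamma_{u_j^{\cO'}}\rangle\bigr)$ has integer entries and identity diagonal blocks, so the task reduces to verifying block upper-triangularity. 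For an off-diagonal entry I would rewrite
\[
\langle(\rho_i^\cO)^*,\gamma_{u_j^{\cO'}}\rangle=\varepsilon_{\rho_i^\cO}\,\langle\rho_i^\cO,\mbox{D}_G(\gamma_{u_j^{\cO'}})\rangle
\]
and combine two vanishing inputs: by \cite[6.13, 8.6]{LuUS}, $\mbox{D}_G(\gamma_{u_j^{\cO'}})$ is supported on unipotent classes $\cO''$ with $\cO'\leq\cO''$ in the closure order, and by Theorem~\ref{thm31}(b) together with Proposition~\ref{shoji}, the character $\rho_i^\cO$ vanishes on every unipotent class $\cO''\neq\cO$ with $\dim\cO''\geq\dim\cO$. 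Any class $\cO''$ contributing to the scalar product must therefore satisfy $\cO'\leq\cO''$ together with $\cO''=\cO$ or $\dim\cO''<\dim\cO$; if $\dim\cO'>\dim\cO$ the closure relation forces $\dim\cO''>\dim\cO$, contradicting both options, while if $\dim\cO'=\dim\cO$ and $\cO'\neq\cO$ then only $\cO''=\cO$ remains, but this forces $\cO'\leq\cO$ with equal dimensions, hence $\cO'=\cO$, a contradiction. Thus $M$ is block upper-triangular with identity diagonal blocks, $\det M=\pm 1$, and Lemma~\ref{triang1} yields Conjecture~\ref{Kconj}.

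The most delicate step is the character-theoretic vanishing in the second input above: translating Theorem~\ref{thm31}(b), a purely geometric statement about character sheaves, into vanishing of ordinary irreducible characters in $\Irr_{s_\cO,\cF_\cO}(G^F)$ on higher-dimensional unipotent classes. This step relies essentially on Shoji's Proposition~\ref{shoji}, which lets one invert the linear relation between the characteristic functions $\chi_{A,\phi}$ for $F$-stable $A\in\hat{G}_{s_\cO,\cF_\cO}$ and the elements of $\Irr_{s_\cO,\cF_\cO}(G^F)$, so that vanishing on the character-sheaf side transfers to the irreducible-character side. Once this translation is recorded, the rest of the argument is a routine assembly of the block-matrix calculation.
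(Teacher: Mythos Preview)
Your argument is correct and follows essentially the same route as the paper: reduce to $G$ simple modulo its center, pick for each $F$-stable unipotent class an $F$-stable $(s,\cF)$ satisfying~$(*)$ via Proposition~\ref{prop25}, use Proposition~\ref{prop41} (or Remark~\ref{rem42}) for the diagonal blocks, and then show block-triangularity by combining the support property of $\mbox{D}_G(\gamma_u)$ from \cite[6.13, 8.6]{LuUS} with the vanishing of $\rho$ on higher-dimensional unipotent classes, concluding by Lemma~\ref{triang1}. The ordering of classes by decreasing rather than increasing dimension is purely cosmetic.

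The one point worth flagging is your justification of the second vanishing input. You obtain ``$\rho_i^\cO$ vanishes on unipotent classes $\cO''\neq\cO$ with $\dim\cO''\geq\dim\cO$'' by passing through Theorem~\ref{thm31}(b) and then inverting the relation between characteristic functions and irreducible characters. This is valid, but note that Proposition~\ref{shoji} as stated in the paper only records one direction; the inversion you need requires the full Lusztig--Shoji comparison (that the $\chi_{A,\phi}$ for $F$-stable $A\in\hat{G}_{s,\cF}$ and the elements of $\Irr_{s,\cF}(G^F)$ span the same space), which is indeed available from \cite{S2}. The paper instead cites \cite[Theorem~11.2]{LuUS} directly for this vanishing---the same reference already invoked inside the proof of Proposition~\ref{prop41}---which makes what you call ``the most delicate step'' a one-line citation.
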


\begin{proof} By standard reduction arguments, we can assume without
loss of generality that $G$ is simple modulo its center. If $G$ is of
type $A_n$ or of exceptional type, the assertion has been proved
by Kawanaka \cite[Theorem~2.4.3]{K3}, using his explicit formulas for
the character values of GGGR's. The following argument covers these
cases as well.

Let $\cO_1,\ldots,\cO_N$ be the $F$-stable unipotent classes of $G$,
where the numbering is chosen such that $\dim \cO_1 \leq \cdots \leq
\dim \cO_N$. By Proposition~\ref{prop25}, for each $i$, we can find 
an $F$-stable pair $(s_i,\cF_i) \in \cS_G$ such that $\cO_i=\Phi_G(s_i,
\cF_i)$ and condition ($*$) in Proposition~\ref{prop24} holds.

For each $i$, let $u_{i,1},\ldots,u_{i,d_i}$ be a set of representatives
for the $G^F$-conjugacy classes contained in $\cO_i^F$. Let
$\rho_{i,1},\ldots,\rho_{i,d_i}$ be irreducible characters as
in Proposition~\ref{prop41} (if $G$ is of classical type) or as in
Remark~\ref{rem42} (if $G$ is of exceptional type). We claim that 
\[ \langle \rho_{i_1,j_1}^*,\gamma_{u_{i_2,j_2}}\rangle=0
\qquad \mbox{if $i_1<i_2$}.\]
This is seen as follows. We have $\langle \rho_{i_1,j_1}^*,\gamma_{u_{i_2,
j_2}}\rangle=\pm \langle \rho_{i_1,j_1},\mbox{D}_G(\gamma_{u_{i_2,j_2}})
\rangle$. By the definition of the scalar product, we have
\[\langle \rho_{i_1,j_1},\mbox{D}_G(\gamma_{u_{i_2,j_2}})
\rangle=\frac{1}{|G^F|} \sum_{g \in G^F} \overline{\rho_{i_1,j_1}(g)}\,
\mbox{D}_G(\gamma_{u_{i_2,j_2}})(g).\] 
We now argue as in the proof of Proposition~\ref{prop41} to evaluate this 
sum. First of all, it's enough to let $g$ run over all unipotent elements of
$G^F$. Now let $g \in G^F$ be unipotent and assume, if possible, that the
corresponding term in the above sum is non-zero. The fact that $\rho_{i_1,
j_1}(g)\neq 0$ implies that the class of $g$ either equals $\cO_{i_1}$ or
has dimension $<\dim \cO_{i_1}$. Furthermore, the fact that $\mbox{D}_G
(\gamma_{u_{i_2,j_2}})(g)\neq 0$ implies that $\cO_{i_2}$ is contained in 
the closure of the class of $g$. Since we numbered the unipotent classes 
according to increasing dimension, we conclude that $\dim \cO_{i_1}=
\dim \cO_{i_2}$; furthermore, $g \in \cO_{i_1}$ and $\cO_{i_2}$ is 
contained in the closure of the class of $g$, which finally shows that
$\cO_{i_1}=\cO_{i_2}$, a contradiction. Thus, our assumption was wrong,
and the above scalar product is zero.

Together with the relations in Proposition~\ref{prop41} (or 
Remark~\ref{rem42}), we now see that the matrix of all scalar products
\[ \langle \rho_{i_1,j_2}^*, \gamma_{u_{i_2,j_2}}\rangle_{1\leq i_1,i_2
\leq N, \,1\leq j_1\leq d_{i_1},\,1\leq j_2\leq d_{i_2}}\]
is a block triangular matrix where each diagonal  block is an 
identity matrix. Hence that matrix of scalar products is invertible over
$\Z$ and so Kawanaka's conjecture holds by Lemma~\ref{triang1}.
\end{proof}

\begin{prop}[Characterisation of GGGR's] \label{Charac} Recall that $p,q$ 
are large enough and the center of $G$ is connected. Let $\cO$ be an 
$F$-stable unipotent class in $G$ and $\chi$ be a character of~$G^F$. 
Then $\chi=\gamma_u$ for some $u \in \cO^F$ if and only if the following 
three conditions are satisfied:
\begin{itemize}
\item[(a)] If $\chi(g) \neq 0$ for some $g \in G^F$, then the conjugacy 
class of $g$ is contained in the closure of $\cO$.
\item[(b)] If $\operatorname{D}_G(\chi)(g) \neq 0$ for some $g \in G^F$,
then $\cO$ is contained in the closure of the conjugacy class of $g$.
\item[(c)] We have $\chi(1)=|G^F|q^{-\dim \cO/2}$.
\end{itemize}
\end{prop}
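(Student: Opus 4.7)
For the easy direction, I just need to check that a GGGR $\gamma_u$ with $u\in\cO^F$ satisfies (a)--(c): (a) is Kawanaka's support statement that $\gamma_u$ vanishes off $\overline{\cO}^F$; (b) is the reformulation of \cite[6.13(i), 8.6]{LuUS} already invoked in the proof of Proposition~\ref{prop41}; and (c) is the standard degree formula $\gamma_u(1)=|G^F|q^{-\dim\cO/2}$.

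For the converse, suppose $\chi$ satisfies (a)--(c). Condition (a) makes $\chi$ unipotently supported, so Theorem~\ref{KawConj} writes $\chi=\sum_{k,j}a_{k,j}\gamma_{u_{k,j}}$ with $a_{k,j}\in\Z$, where (as in the proof of Theorem~\ref{KawConj}) the $F$-stable unipotent classes are ordered $\cO_1,\ldots,\cO_N$ by non-decreasing dimension with $\cO=\cO_{i_0}$, and $u_{k,1},\ldots,u_{k,d_k}$ are representatives for the $G^F$-classes inside $\cO_k^F$. The plan is a pincer: use (a) and (b) to force $a_{k,j}=0$ whenever $\cO_k\neq\cO$, then use (c) and positivity to single out a unique surviving GGGR.

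For the bottom-up half, set $k_0:=\min\{k:\exists j,\ a_{k,j}\neq 0\}$. Since $D_G(\gamma_v)$ is supported on classes $\cO'$ with (class of $v$)$\,\subseteq\overline{\cO'}$ by \cite[6.13(i), 8.6]{LuUS}, only the $k=k_0$ terms contribute to $D_G(\chi)|_{\cO_{k_0}^F}$, and this restriction vanishes by (b) whenever $\cO\not\subseteq\overline{\cO_{k_0}}$. Combining \cite[(2.4a)]{G1} with the invertibility of $\bigl(Y_\cE(u_{k_0,j})\bigr)$ (as at the start of the proof of Corollary~\ref{cor35}) then forces $a_{k_0,j}=0$ for all $j$, contradicting the choice of $k_0$; hence $k_0\geq i_0$. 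A symmetric top-down argument with $k_1:=\max\{k:\exists j,\ a_{k,j}\neq 0\}$ uses that $\gamma_v$ is supported on $\overline{\cO_v}^F$, so only $k=k_1$ contributes to $\chi|_{\cO_{k_1}^F}=\sum_j a_{k_1,j}\gamma_{u_{k_1,j}}|_{\cO_{k_1}^F}$, which vanishes by (a) if $\cO_{k_1}\neq\cO$; the linear independence of the restrictions $\gamma_{u_{k_1,j}}|_{\cO_{k_1}^F}$ as $j$ varies then forces $a_{k_1,j}=0$, again a contradiction. Thus $k_0=k_1=i_0$ and $\chi=\sum_{j=1}^{d}a_j\gamma_{u_{i_0,j}}$ with $a_j:=a_{i_0,j}\in\Z$.

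Finally, (c) together with the common degree $\gamma_{u_{i_0,j}}(1)=|G^F|q^{-\dim\cO/2}$ yields $\sum_j a_j=1$. Proposition~\ref{prop25} supplies an $F$-stable pair $(s,\cF)$ with $\cO=\Phi_G(s,\cF)$ satisfying $(*)$; then Proposition~\ref{prop41} (if $\cG_\cF$ is abelian) or Remark~\ref{rem42} (if $\cG_\cF\cong\mathfrak{S}_3,\mathfrak{S}_4,\mathfrak{S}_5$) provides $\rho_j\in\Irr_{s,\cF}(G^F)$ with $\langle\rho_j^*,\gamma_{u_{i_0,j'}}\rangle=\delta_{jj'}$. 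Since $\rho_j^*$ is irreducible and $\chi$ is a genuine character, $a_j=\langle\rho_j^*,\chi\rangle\in\Z_{\geq 0}$; combined with $\sum_j a_j=1$, exactly one $a_{j_0}$ equals $1$ and the rest vanish, so $\chi=\gamma_{u_{i_0,j_0}}$. The main obstacle I foresee is the top-down half of the pincer: the linear independence of $\gamma_{u_{k_1,j}}|_{\cO_{k_1}^F}$ is the natural counterpart of the invertibility used in the bottom-up half, but unlike the latter it does not drop out of \cite[(2.4a)]{G1} directly, and will have to be extracted either from Kawanaka's explicit character formula for $\gamma_u$ on its own class or from a careful $D_G$-duality argument exchanging the roles of $\gamma_v$ and $D_G(\gamma_v)$.
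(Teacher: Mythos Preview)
Your proof is correct and reaches the same conclusion by the same endgame (non-negativity from Proposition~\ref{prop41}/Remark~\ref{rem42}, then the degree condition~(c)), but the reduction to $\chi=\sum_j a_j\gamma_{u_{i_0,j}}$ is organised differently. The paper does not run a pincer on extremal indices; instead it argues by spanning. Since the $\gamma_u$ (over all unipotent $u$) form a basis of unipotently supported class functions (Theorem~\ref{KawConj}), the subset $\{\gamma_u : u\in\overline{\cO}^F\}$ is linearly independent, lies in the space of class functions supported on $\overline{\cO}^F$, and has the same cardinality as the number of $G^F$-classes there---hence is a basis of that space. Thus any $\chi$ satisfying~(a) lies in their span. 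Dually, because $D_G$ is an involution on unipotently supported class functions and $D_G(\gamma_u)$ is supported on $\{g:\text{class}(u)\subseteq\overline{\text{class}(g)}\}$, the same counting shows that any $\chi$ satisfying~(b) lies in the span of $\{\gamma_u:\cO\subseteq\overline{\text{class}(u)}\}$. Intersecting, $\chi$ is an integral combination of $\gamma_u$ with $u\in\cO^F$.

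This spanning argument also dissolves the obstacle you flag in the top-down half. If $\sum_j a_{k_1,j}\gamma_{u_{k_1,j}}$ vanishes on $\cO_{k_1}^F$, it is supported on $(\overline{\cO_{k_1}}\setminus\cO_{k_1})^F$, hence lies in the span of $\{\gamma_{u'}:u'\in(\overline{\cO_{k_1}}\setminus\cO_{k_1})^F\}$; comparing with the original expression and using linear independence of all $\gamma_u$ forces $a_{k_1,j}=0$. So your pincer can be completed without appealing to explicit Kawanaka character formulas or a separate $D_G$-duality computation, but at that point you are essentially rediscovering the paper's shorter route. (You should also insert the reduction to $G$ simple modulo its centre before invoking the abelian/$\mathfrak{S}_n$ dichotomy for $\cG_\cF$, as the paper does.)
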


\begin{proof} If $\chi=\gamma_u$ for some $u \in \cO^F$, then (a) and (c) 
are easily seen to hold by the construction of $\Gamma_u$; see 
Kawanaka \cite{K2}. Condition (b) is obtained as a consequence of 
\cite[6.13(i) and 8.6]{LuUS}. To prove the converse, by standard reduction 
arguments, we can assume without loss of generality that $G$ is simple 
modulo its center. Assume now that (a), (b) and (c) hold for $\chi$. 
Since $\chi$ is unipotently supported, we can write $\chi$ as an integral 
linear combination of the characters of the various GGGR's of $G^F$; see 
Theorem~\ref{KawConj}.

Now, given any $F$-stable unipotent class $\cO'$, the characters 
$\gamma_u$, where $u \in \cO'^F$, satisfy (a) with respect 
to $\cO'$. Hence, all characters $\gamma_u$, where $u$ is contained in 
the closure of $\cO$, satisfy (a). One easily deduces 
that {\em any} class function satisfying (a) is a linear combination 
of various $\gamma_u$ where $u$ is contained in the closure of
$\cO$. Similarly, {\em any} class function satisfying (b) is a linear 
combination of various $\mbox{D}_G(\gamma_u)$ where $\cO$ is contained
in the closure of the class of $u$. Hence, a class function satisfying 
both (a) and (b) will be a linear combination of various $\gamma_u$ 
such that $u \in \cO^F$. 

Let $u_1, \ldots,u_d$ be representatives for the $G^F$-conjugacy classes 
in $\cO^F$.  Then the above discussion shows that we can write $\chi=
\sum_{j=1}^d a_j \, \gamma_{u_j}$ where $a_j \in \Z$ for all $i$.

Now consider the characters $\rho_i$ in Proposition~\ref{prop41} (for
$G$ of classical type) or in Remark~\ref{rem42} (for $G$ of exceptional
type). Taking scalar products of $\chi$ with $\rho_i^*$, we find that 
$a_i \geq 0$ for all $i$ and so $\chi$ is a positive sum of characters 
of various GGGR's associated with $\cO^F$. All these GGGR's have dimension 
$|G^F|q^{-\dim \cO/2}$. Hence $\chi(1)$ is a positive integer multiple of 
$|G^F|q^{-\dim \cO/2}$. Condition (c) now forces that $\chi=\gamma_u$ for 
some $u \in \cO^F$, as required. 
\end{proof}

\end{document}